\newcommand{\be}{\begin{equation}}
\newcommand{\ee}{\end{equation}}
\newcommand{\beano}{\begin{eqn*}} 
	\newcommand{\eeano}{\end{eqnarray*}}
\newcommand{\ba}{\begin{array}}
	\newcommand{\ea}{\end{array}}
\declaretheoremstyle[headfont=\normalfont]{normalhead}
\newtheorem{theorem}{Theorem}[section]
\newtheorem{lemma}[theorem]{Lemma}
\newtheorem{definition}[theorem]{Definition}
\newtheorem{proposition}[theorem]{Proposition}
\newtheorem{remark}[theorem]{Remark}
\numberwithin{equation}{section}
\newtheorem{ex}[theorem]{Example}
\newtheorem{problem}[theorem]{Problem}
\begin{document}
\title{Tensor product and quandle rings of connected quandles of prime order}
\author{Dilpreet Kaur}
\email{dilpreetkaur@iitj.ac.in}
\address{Indian Institute of Technology Jodhpur}

\author{Pushpendra Singh}
\email{singh.105@iitj.ac.in}
\address{Indian Institute of Technology Jodhpur}

\subjclass[2020]{57K12, 20C15, 17D99, 16S34}
\keywords{Quandle rings, Connected quandles, Affine quandles, Representations and characters}
\maketitle

\begin{abstract}
Let $\mathbb{C}$ be field of complex numbers and $X$ be a connected quandle of prime order. We study the regular representation of $X$ by describing the quandle ring $\mathbb{C}[X]$ as direct sum of right simple ideals. We provide description of tensor product of connected quandles of prime order. We further discuss multiplicity freeness of quandle ring decomposition for connected quandles of order $\leq 47$ and prove that $\mathbb{C}[X]$ decomposes multiplicity free for affine connected quandle $X$.
\end{abstract}

\section{Introduction}\label{1}

A quandle is a set with a binary operation that satisfies axioms representing algebraic versions of three Reidemeister moves of a knot diagram. Quandles were first studied independently by David Joyce \cite{DEJ82} and Sergei Matveev \cite{SVM82} as invariant of knots. They introduced the notion of the fundamental quandle of a knot and proved that it is an invariant of knots up to orientation. Since their introduction, quandles are being studied extensively to compute other invariants of knots and links.

Quandles are also being studied from a purely algebraic point of view. We briefly provide a summary of some results in this direction. Enumeration of quandles up to order 13 is provided in \cite{VY19}. A quandle is associated with groups in the form of an automorphism group, displacement group, and inner automorphism group. This further gives classes of quandles, such as homogeneous, connected, affine and latin quandles. Authors in \cite{HSV16} used properties of these groups to give various results related to connected quandles. For a prime $p$, description of connected quandles of order $p$ and $p^2$ is given in \cite{EGS01} and \cite{MG04} respectively. In fact, it is also shown that such connected quandles are affine quandles.

Analogous to group rings, quandle rings are introduced in \cite{BPS19}. Zero divisors and idempotents of quandle rings are studied in \cite{BPS22}. Authors in \cite{EFT19} studied the right ideals of quandle rings. They further provided results of isomorphic quandle rings for nonisomorphic quandles. Analogous to group representations, quandle representation is introduced in \cite{EM18}. A complete decomposition of the regular representation of dihedral quandles is provided in \cite{KS22} and \cite{ESZ23}. This result has been generalized to the class of Takasaki quandles in \cite{KS23}. Furthermore, in \cite{ESZ23}, it has been proven that Maschke's theorem is not valid in the case of quandle representations.

In this article, we study the quandle rings of connected quandles of prime order. In section \ref{2}, we give brief review on quandles and quandle rings. In section \ref{3}, we provide description of the inner automorphism group of prime connected quandles. In sections \ref{4} and \ref{5}, we describe the conjugacy classes and irreducible representations of the inner automorphism group. In section \ref{6}, we give complete decomposition of quandle ring of prime connected quandles as direct sum of right simple ideals. In section \ref{7}, we give the results for the tensor product of connected quandles of prime order. Furthermore, in section \ref{8}, we explore the decomposition of the quandle ring for connected quandles of order $\leq 47$ and prove the multiplicity freeness of quandle ring decomposition for affine connected quandles.

\section{Review of quandles and quandle rings} \label{2}
A quandle is an algebraic structure $(X,\triangleright)$ which satisfies the following three axioms:
\begin{enumerate}[(1).]
 \item $x \triangleright x = x,$  $~\forall~ x \in X.$
 \item $~\forall~x,y\in X,~\exists!~ z\in X$ such that $x=z \triangleright y.$
 \item $(x \triangleright y) \triangleright z = (x \triangleright z)\triangleright (y \triangleright z)$ $~\forall~x,y,z \in X.$
\end{enumerate}
Above axioms $(1),(2)$ and $(3)$ are algebraic versions of Reidemeister moves \text{I}$,$\text{II} and \text{III} respectively. If axiom $(1)$ is removed then the obtained structure is known as rack. The axioms $(2)$ and $(3)$ imply that the mapping $R_y : X \to X$ defined as $R_{y}(x) = x \triangleright y$ is an automorphism of quandle $X$ for each $y $ in $X$. They are known as inner automorphisms.
The axiom $(2)$ induces another binary operation ${\triangleright}^{-1}$ on $X$ defined as $x~{\triangleright}^{-1}~y = z$ if $x = z \triangleright y$. This gives the following alternate form of  second axiom 
$$ \displaystyle (x~{\triangleright}~y )~{\triangleright}^{-1}~y = x \quad \text{ and } \quad ( x~{\triangleright}^{-1}~y )~\triangleright~y = x  $$
for all $x,y \in X$.
The group generated by automorphisms of form $R_y,~y \in X$ is called inner automorphism group of quandle $X$. It is denoted by $Inn(X)$. Similar to the map $R_y,$ left multiplication map is defined as $L_{y}(x) = y \triangleright x$ for all $y \in X$. However $L_y$ need not be a bijective map.

If the action of group $Inn(X)$ on quandle $X$ is transitive then the quandle is called connected quandle. If $L_{y}$ is bijective map for all $y \in X,$ then the quandle is known as latin quandle. We refer to [\cite{BPS19},\cite{BPS22},\cite{EFT19}] for examples of quandles. In this article, we will focus on affine quandles, also known as Alexander quandles. We give their definition below.

\begin{definition}
Let $ \mathcal{M}$ be a $ \mathbb{Z}[t,t^{-1}]-$module with operation $\triangleright$ defined as $x \triangleright y = tx + (1-t)y$ for all $x,y \in \mathcal{M}$. Then $(\mathcal{M},\triangleright)$ is called an affine quandle.
\end{definition}
As an example trivial quandle of order $n$ is an affine quandle with $\mathcal{M} = \mathbb{Z}[t,t^{-1}]/(n,1-t)$. Alternatively affine quandle can be defined as pair $(A,f)$ where $A$ is an additive abelian group, $f \in Aut(A)$  and  quandle operation is defined as $x \triangleright y = f(x) +(id_{A}-f)(y)$ for all $x,y \in A$. For cyclic group $\mathbb{Z}_{m},~m \in \mathbb{N}$, we use symbol $(\mathcal{A}_{m},f)$ for the corresponding affine quandle. In particular, we use $(\mathcal{A}_{p},f)$ for affine quandles of prime order.

Let $(X,\triangleright)$ be a quandle and let $K$ be a ring. Then analogous to group ring, quandle ring is a $K-$vector space consisting of formal linear expressions of form $\sum_{x \in X}\alpha_{x}x$ with $x \in X$ and $\alpha_x \in K$ having all but finitely many $\alpha_x= 0$. It is denoted by $K[X]$ and it forms a ring structure with pointwise addition and the following multiplication
$$\left( \sum_{x \in X}a_{x}x \right) \cdot \left( \sum_{y \in X}b_{y}y \right) = \sum_{x,y \in X} a_{x}b_{y}(x \triangleright y)$$

We note that the map $\rho : Inn(X) \to GL(K[X])$ defined as $\rho(R_{x})(y) = y \triangleright x$ for all $R_{x} \in Inn(X), y \in X$ is a group representation of $Inn(X)$. Thus the decomposition of $K[X]$ as direct sum of right simple ideals is equivalent to the decomposition of $\rho$ as direct sum of irreducible representations of $Inn(X)$.

A representation of quandle $(X,\triangleright)$ is defined as a quandle homomorphism $\phi : X \to Conj(GL(V))$, where $Conj(GL(V))$ denotes the group of automorphism of vector space $V$ with conjugation as quandle operation. Hence we have $\phi(x \triangleright y) = \phi(y)\phi(x)\phi(y)^{-1}$ for all $x,y \in X$.(see \cite{EM18},\cite{ESZ23}).

\begin{definition}{Regular representation of quandle}: Let $X$ be a finite quandle and $\mathbb{C}[X]$ denote the quandle ring. The regular representation of $X$ is given by mapping $\phi : X \to Conj(GL(\mathbb{C}[X])$ such that $\phi_{z}(f)(y) = f(R_{z}(y))$ where $f = \sum_{x \in X}a_{x}x \in \mathbb{C}[X]$.

\end{definition}
The action in regular representation of quandle $X$ defined above is equivalent to the action in representation $\rho : Inn(X) \to GL(\mathbb{C}[X])$. This implies that restriction of  regular representation of quandle on subspace $W$ of quandle ring $\mathbb{C}[X]$ is a quandle representation if and only if $\rho|_W$ is a group representation of $Inn(X)$. This explains that quandle ring decomposition gives quandle representations.

\section{Inner automorphism groups of affine connected quandles } \label{3}
In this section, we give results regarding structure of inner automorphism group $Inn(\mathcal{A}_{m})$ of affine connected quandle $(\mathcal{A}_m,f)$. For cyclic group $\mathbb{Z}_m$, an automorphism $f \in Aut(\mathbb{Z}_{m})$ is defined as $f(x)=tx$ where $t \in \mathbb{Z}_{m}^{\ast}$. We note that affine connected quandles are latin \cite[ Corollary 3.1]{MB20}. Affine quandle $(A,f)$ is latin if and only if $1-f$ is bijective \cite[ Corollary 7.2]{HSV16}. We observe that if $gcd(m,2)=2$ then $1-f$ is not bijective for any $f \in Aut(\mathbb{Z}_{m})$. This implies if $(\mathcal{A}_{m},f)$ is connected quandle then $gcd(m,2)=1$. 

From now onwards, we assume that $1-f$ is a bijection. Let $n$ be the order of automorphism $f$, which means it is smallest $n \in \mathbb{N}$ such that $t^{n} \equiv 1~(mod~m)$. We use notation $R_j$ for right translations defined as $R_j(x) = tx+(1-t)j$ for each $j \in \mathcal{A}_m$. Now we have following result.
\begin{lemma} \label{InnAp}
Let $(\mathcal{A}_{m},f)$ denote the affine connected quandle of order $m$. Then $Inn(\mathcal{A}_{m}) = \mathbb{Z}_{m} \rtimes \mathbb{Z}_{n}$.
\end{lemma}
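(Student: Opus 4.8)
The plan is to realize $Inn(\mathcal{A}_{m})$ as a subgroup of the affine group $\mathrm{Aff}(\mathbb{Z}_m) = \mathbb{Z}_m \rtimes \mathbb{Z}_m^{\ast}$, whose elements are the maps $x \mapsto ax+b$ with $a \in \mathbb{Z}_m^{\ast}$ and $b \in \mathbb{Z}_m$, and then to split it as a translation part times a linear part. First I would observe that each generator $R_j(x) = tx + (1-t)j$ is exactly such an affine map, with linear part $t$, so $Inn(\mathcal{A}_{m}) \le \mathrm{Aff}(\mathbb{Z}_m)$. Composing the projection $\pi : \mathrm{Aff}(\mathbb{Z}_m) \to \mathbb{Z}_m^{\ast}$, sending $x \mapsto ax+b$ to $a$, with this inclusion gives a homomorphism; since every $R_j$ has linear part $t$, any product of the $R_j$ and their inverses has linear part a power of $t$, so the image of $Inn(\mathcal{A}_{m})$ is the cyclic group $\langle t \rangle$, which has order $n$ by the definition of $n$, hence is $\cong \mathbb{Z}_n$.

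Next I would identify the kernel $T := Inn(\mathcal{A}_{m}) \cap \ker \pi$, the translations lying in $Inn(\mathcal{A}_{m})$. A short computation gives $R_j R_0^{-1}(x) = x + (1-t)j$, so $R_j R_0^{-1}$ is translation by $(1-t)j$. Here the standing hypothesis that $1-f$ is bijective is decisive: it says $1-t \in \mathbb{Z}_m^{\ast}$, so as $j$ ranges over $\mathbb{Z}_m$ the element $(1-t)j$ ranges over all of $\mathbb{Z}_m$. Thus every translation $x \mapsto x+b$ belongs to $Inn(\mathcal{A}_{m})$, so $T$ is the full translation subgroup $\cong \mathbb{Z}_m$, and we obtain a short exact sequence $1 \to \mathbb{Z}_m \to Inn(\mathcal{A}_{m}) \to \mathbb{Z}_n \to 1$.

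Finally I would exhibit a complement to upgrade this extension to a semidirect product. The element $R_0(x) = tx$ satisfies $R_0^{s}(x) = t^{s}x$, which is the identity precisely when $t^{s} \equiv 1 \pmod m$; by minimality of $n$ this forces $n \mid s$, so $R_0$ has order exactly $n$ and $\langle R_0 \rangle \cong \mathbb{Z}_n$. The same computation shows $\langle R_0 \rangle \cap T = \{\mathrm{id}\}$, since a nonidentity power of $R_0$ has linear part $t^{s} \ne 1$ and so is not a translation. As $T$ is normal and $T\langle R_0 \rangle = Inn(\mathcal{A}_{m})$, we conclude $Inn(\mathcal{A}_{m}) = T \rtimes \langle R_0 \rangle \cong \mathbb{Z}_m \rtimes \mathbb{Z}_n$, where the generator of $\mathbb{Z}_n$ acts on $\mathbb{Z}_m$ by multiplication by $t$, as one verifies from $R_0 \tau_b R_0^{-1} = \tau_{tb}$ with $\tau_b$ denoting translation by $b$. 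None of the steps is technically deep; the one point requiring care is the translation computation of the second paragraph, where bijectivity of $1-f$ is exactly what forces the translation part to fill out all of $\mathbb{Z}_m$ rather than a proper subgroup. This is the crux that ties connectedness of the quandle to the $\mathbb{Z}_m$ factor of the semidirect product.
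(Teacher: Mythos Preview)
Your proof is correct and uses essentially the same computations as the paper: both identify $s = R_0$ of order $n$ and the translation $R_1 R_0^{-1} = R_1 R_0^{n-1}$ (whose order is $m$ precisely because $1-t$ is a unit), then check the conjugation relation $s r s^{-1} = r^{t}$. The only difference is packaging: you frame the argument via the embedding $Inn(\mathcal{A}_m) \hookrightarrow \mathrm{Aff}(\mathbb{Z}_m)$ and a split short exact sequence, whereas the paper verifies the presentation $\langle r,s \mid r^m,\, s^n,\, srs^{-1}=r^{t}\rangle$ directly.
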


\begin{proof}
We prove that $Inn(\mathcal{A}_{m})$ has following presentation :
$$Inn(\mathcal{A}_{m}) = \{r,s~|~r^m=1,s^n=1,srs^{-1} = r^{t}\}$$

We claim that $r = R_{1}R_{0}^{n-1}$ and $s = R_{0}$. Using quandle operation, we get $R_{0}^{n}(x) = t^{n}x = x$ and $R_{1}R_{0}^{n-1}(x) = x +(1-t)$. Applying it $j$ times we get $\left( R_{1}R_{0}^{n-1} \right)^{j}(x) = x + j(1-t)$. Now since $(\mathcal{A}_{m},f) $ is connected so $j(1-t) \equiv 0~( mod~m) $ only when $j=m$. Thus order of $\left( R_{1}R_{0}^{n-1} \right)$ is $m$.

Now we prove $sr = r^{t}s$. First $R_{0}\left( R_{1}R_{0}^{n-1} \right)(x) = tx + t(1-t)$. Now $\left( R_{1}R_{0}^{n-1} \right)R_{0}(x) = \left( R_{1}R_{0}^{n-1} \right)(tx) = tx + (1-t)$. Applying it $t$ times we get $\left( R_{1}R_{0}^{n-1} \right)^{t}(tx) = tx + t(1-t)$. This proves the result.
\end{proof}

From now onwards, we follow the notation from the above lemma. We call $r$ and $s$ as generators of $Inn(\mathcal{A}_m)$ and  we note that $r=R_1R_0^{n-1}$ and $s=R_0$. The following result gives presentation of element $R_{j}^{k}$ for $1 \leq k \leq n-1$ in terms of generators of group $Inn(\mathcal{A}_{m})$.

\begin{lemma}\label{Rjk}
Let $R_{j}$ be right multiplication map for affine quandle $(\mathcal{A}_{m},f)$. Then for $1 \leq k \leq n-1$ we have
$$ R_{j}^{k} = \left( R_{1}R_{0}^{n-1} \right)^{j\sum_{i=0}^{k-1}t^{i} }R_{0}^{k} $$
\end{lemma}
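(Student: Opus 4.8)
The plan is to verify the identity by evaluating both sides on an arbitrary element $x \in \mathcal{A}_m$ and checking that they define the same map. Every element of $Inn(\mathcal{A}_m)$ acts as an affine transformation $x \mapsto ax+b$ of $\mathcal{A}_m$, so it suffices to show the two sides agree as functions of $x$.

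First I would compute the left-hand side $R_j^k$ by induction on $k$. Starting from the defining formula $R_j(x) = tx + (1-t)j$, a short induction yields
$$ R_{j}^{k}(x) = t^{k}x + (1-t)j\sum_{i=0}^{k-1}t^{i}, $$
the inductive step being $R_j^{k+1}(x) = t\bigl(R_j^{k}(x)\bigr) + (1-t)j = t^{k+1}x + (1-t)j\sum_{i=0}^{k}t^{i}$. Next I would evaluate the right-hand side using the computations already recorded in the proof of Lemma \ref{InnAp}. There we have $s = R_0$ with $R_0(x) = tx$, hence $R_0^{k}(x) = t^{k}x$; and $r = R_1R_0^{n-1}$ satisfies $r(x) = x + (1-t)$, so that $r^{a}(x) = x + a(1-t)$ for every integer $a$. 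Applying these with $a = j\sum_{i=0}^{k-1}t^{i}$ gives
$$ \left( R_{1}R_{0}^{n-1} \right)^{j\sum_{i=0}^{k-1}t^{i}}R_{0}^{k}(x) = t^{k}x + \left( j\sum_{i=0}^{k-1}t^{i} \right)(1-t), $$
which coincides with the expression obtained for $R_j^{k}(x)$. Since the two maps agree on every $x$, they are equal, proving the claim.

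The computation is essentially routine, so there is no substantial obstacle beyond careful bookkeeping of the geometric sum $\sum_{i=0}^{k-1}t^{i}$. The only point deserving a word of care is that the exponent $a = j\sum_{i=0}^{k-1}t^{i}$ is to be interpreted modulo $m$; this is harmless because $r$ has order $m$ by Lemma \ref{InnAp}, so $r^{a}$ depends only on $a \bmod m$ and the formula is well defined.
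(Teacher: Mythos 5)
Your proof is correct and follows essentially the same route as the paper: iterate $R_j(x) = tx+(1-t)j$ to get $R_j^k(x) = t^k x + (1-t)j\sum_{i=0}^{k-1}t^i$, then match this against the action of $\left(R_1R_0^{n-1}\right)^{j\sum_{i=0}^{k-1}t^i}R_0^k$ computed from the generator formulas in Lemma \ref{InnAp}. The only difference is that you spell out the right-hand side evaluation and the well-definedness of the exponent modulo $m$, which the paper leaves implicit.
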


\begin{proof}
We have $R_{j}(x) = tx+(1-t)j$, upon repeating this $k$ times, we get $R_{j}^{k}(x) = t^{k}x + (1+t+t^2+\hdots+t^{k-1})(1-t)j = \left( R_{1}R_{0}^{n-1} \right)^{j\sum_{i=0}^{k-1}t^{i} }R_{0}^{k}(x) $. This proves the result.
\end{proof}

\begin{remark} \label{fpr0}
We note that right translation map $R_{0}$ of quandle $(\mathcal{A}_m,f)$ is defined as $R_{0}(x) = tx$ which is same as automorphism $f$. Thus both $R_{0}$ and $f$ have same number of fixed points. For a group $G$, automorphism $f \in Aut(G)$ is called fixed point free if $f(x)=x$ implies $x=e$, where $e$ is identity of $G$.

\end{remark}

\begin{lemma}\label{fpfree}
For affine quandle $(\mathcal{A}_m,f)$, the automorphism $f$ is fixed point free if and only if $ \left( \mathcal{A}_{m}, f \right)$ is latin quandle.
\end{lemma}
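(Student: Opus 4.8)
The plan is to reduce the statement to a short chain of equivalences, pivoting on the injectivity of the endomorphism $1-f$. Since the excerpt already records (from \cite[Corollary 7.2]{HSV16}) that the affine quandle $(\mathcal{A}_m,f)$ is latin if and only if $1-f$ is bijective, it suffices to show that $f$ is fixed point free if and only if $1-f$ is bijective.

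First I would translate the fixed-point-free condition into a statement about the kernel of $1-f$. By the definition recalled in Remark \ref{fpr0}, $f$ is fixed point free precisely when $f(x)=x$ forces $x=0$. Writing $f(x)=tx$, the equation $f(x)=x$ becomes $(1-t)x\equiv 0\pmod m$, that is $(1-f)(x)=0$. Hence $f$ is fixed point free if and only if $\ker(1-f)=\{0\}$, i.e.\ if and only if the group endomorphism $1-f$ of $\mathbb{Z}_m$ is injective.

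Next I would invoke finiteness: $\mathbb{Z}_m$ is a finite set, so an injective endomorphism is automatically surjective, hence bijective. Therefore $1-f$ is injective if and only if it is bijective. Combining this with the previous step and the cited characterization of latin affine quandles gives the chain
\[
f \text{ fixed point free} \iff 1-f \text{ injective} \iff 1-f \text{ bijective} \iff (\mathcal{A}_m,f) \text{ latin},
\]
which is exactly the claim.

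There is no genuine obstacle here; the only point requiring care is the passage from injectivity to bijectivity, which relies essentially on the finiteness of $\mathcal{A}_m$ (the equivalence would fail over an infinite module, where $1-f$ could be injective without being surjective). If one preferred to avoid citing \cite[Corollary 7.2]{HSV16}, the same argument works by unwinding the latin condition directly: the left translation $L_y(x)=y\triangleright x=ty+(1-t)x$ is a bijection for every $y$ exactly when multiplication by $1-t$ is a bijection of $\mathbb{Z}_m$, which again is the bijectivity of $1-f$.
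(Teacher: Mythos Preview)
Your proof is correct and slightly more streamlined than the paper's. You leverage the already-cited characterization from \cite[Corollary 7.2]{HSV16} (latin $\iff$ $1-f$ bijective) and reduce everything to the single observation that $f$ is fixed point free exactly when $\ker(1-f)=\{0\}$, then invoke finiteness to pass from injective to bijective. The paper instead argues both implications by contrapositive at the level of elements: if $(\mathcal{A}_m,f)$ is not latin it finds $i\neq j$ with $R_i(x)=R_j(x)$ and deduces $f(i-j)=i-j$; conversely, from a nonzero fixed point $j$ of $R_0$ it exhibits $L_j(0)=L_j(j)=j$ to show $L_j$ is not injective. The underlying content is identical (both hinge on $(1-t)(i-j)=0$), but your packaging via the kernel of $1-f$ is cleaner and makes the role of finiteness explicit, whereas the paper's hands-on computation avoids appealing to the external citation a second time. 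Your closing remark about unwinding the latin condition directly is essentially the paper's own argument.
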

\begin{proof}
To prove the forward direction, suppose $\left( \mathcal{A}_{m}, f \right)$ is not latin quandle. Then for $i \neq j$, we get $R_{i}(x) = R_{j}(x)$ for some $x$. This gives us $(1-t)i+tx = (1-t)j+tx$ and so  $t(i-j) = f(i-j) = i-j$. This implies $f$ is not fixed point free. The result now follows. 

Conversely suppose $f$ is not fixed point free automorphism. Then map $R_{0}$ has an element $j \neq 0$ such that $R_{0}(j)=j$. This implies $L_{j}(0) = j$ but we also have $L_{j}(j) = j$. So $L_{j}$ is not a permutation. This contradicts that $\left( \mathcal{A}_{m},f \right)$ is latin quandle. Hence $f$ is fixed point free automorphism.
\end{proof}

\begin{lemma}\label{prodcycles}
Each right multiplication map $R_{j}$ of an affine quandle $(\mathcal{A}_{p},f)$ is product of $(p-1)/n$ many disjoint cycles of order $n$.
\end{lemma}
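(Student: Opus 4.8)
The plan is to analyze $R_j$ directly as a permutation of $\mathbb{Z}_p$ and to read off its cycle structure from the iterate formula already recorded in the proof of Lemma \ref{Rjk}. First I would observe that since $(1+t+\cdots+t^{k-1})(1-t) = 1-t^k$, that iterate simplifies to
$$R_j^k(x) = t^k x + (1-t^k)j, \qquad \text{equivalently} \qquad R_j^k(x) - j = t^k(x-j).$$
Thus, after the change of variable $y = x-j$, the action of $R_j$ is conjugate to multiplication by $t$ on $\mathbb{Z}_p$, which is the fact that drives the whole computation.

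Next I would identify the fixed points. Taking $k=1$ gives $R_j(x) - j = t(x-j)$, so $R_j(x) = x$ forces $(t-1)(x-j) = 0$; since $p$ is prime and $t \neq 1$ (connectedness makes $1-f$ bijective, hence $1-t \neq 0$), the unique fixed point is $x = j$. So $R_j$ fixes $j$ and permutes the remaining $p-1$ elements among themselves.

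Then I would determine the length of each nontrivial orbit. For $x \neq j$ we have $y = x-j \neq 0$, and $R_j^k(x) = x$ is equivalent to $t^k y = y$, i.e. $(t^k-1)y = 0$. Because $\mathbb{Z}_p$ is a field and $y \neq 0$, this holds precisely when $t^k \equiv 1 \pmod p$, and the least such positive $k$ is the order $n$ of $t$. Hence every one of the $p-1$ non-fixed points lies in a cycle of length exactly $n$; these cycles are disjoint and exhaust all non-fixed points, and since $n = \mathrm{ord}(t)$ divides $|\mathbb{Z}_p^{\ast}| = p-1$, their number is the integer $(p-1)/n$.

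The one delicate point, and the place where primality is essential, is the claim that each nontrivial orbit has length exactly $n$ rather than a proper divisor of $n$: this rests on the field property of $\mathbb{Z}_p$, which converts $t^k y = y$ with $y \neq 0$ into the clean condition $t^k = 1$. Over a non-field $\mathbb{Z}_m$ one could get shorter orbits when $y$ is a zero divisor, so the argument genuinely uses that the order is prime. I would therefore flag this as the crux and present everything else as routine bookkeeping with the iterate formula.
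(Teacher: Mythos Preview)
Your proof is correct and follows essentially the same idea as the paper: both reduce to analyzing the orbits of multiplication by $t$ on $\mathbb{Z}_p^{\ast}$ and use primality to conclude each nontrivial orbit has length exactly $n$. The only difference is packaging: the paper cites \cite[Lemma~2.4]{LLV22} to say all $R_j$ share the same cycle structure and then works with $R_0$, whereas you make this explicit and self-contained via the substitution $y=x-j$, which conjugates $R_j$ to $R_0$ directly.
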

\begin{proof}
From Lemma \cite[ Lemma 2.4]{LLV22}, we get that every right multiplication map of $(\mathcal{A}_m,f)$ have the same cycle structure. So it is enough to prove the result for map $R_{0}$. We observe that $\left(tx,t^{2}x,\hdots,t^{n}x \right)$ is a cycle in $R_{0}$. Now suppose $t^{i}x \equiv t^{j}x~(mod~p)$ then $p \mid x(t^i-t^j)$. Since $gcd(x,p)=1$ so $p \mid (t^i-t^j)$ i.e. $t^i \equiv t^j~(mod~p)$ which implies $i=j$. Thus we get that $R_{0}$ is product of $( p-1 )/n$ many disjoint cycles of order $n$.
\end{proof}
We give following example to note that Lemma \ref{prodcycles} does not hold for $(\mathcal{A}_m,f)$ for composite $m$.
\begin{ex} \label{example1}
Let $(\mathcal{A}_{21},f)$ be an affine connected quandle with $f$ given as $f(x)=11x$ then $R_{0}$ is given as permutation $(1,11,16,8,4,2)(3,12,6)(5,13,17,19,20,10)(7,14)(9,15,18)$. 
\end{ex}

\section{Conjugacy classes of inner automorphism group} \label{4}

In this section, we give description of conjugacy classes of group $Inn(\mathcal{A}_p)$ for prime $p$. Recall that $Inn(\mathcal{A}_m)$  has presentation of form $ \{ r,s ~|~r^{m}=1,s^{n}=1,srs^{-1} = r^{t} \}$. The order of group $Inn(\mathcal{A}_m)$ is $mn$ and each element of $Inn(\mathcal{A}_m)$ can be written as $s^{i}r^{j}$ for $ 0 \leq i \leq n-1, 0 \leq j \leq m-1$.  We begin with the following lemma.

\begin{lemma} \label{relations}
The elements of group $Inn(\mathcal{A}_{m})$ satisfy following properties:
\begin{itemize}
\item[$\left(a \right)$]
$s^{i}r^{j} = r^{jt^{i}}s^{i}$

\item[$\left(b \right)$]
$s^{-1}rs=r^{t^{-1}}$

\item[$\left(c \right)$]
$r^{j}s^{i} = s^{i}r^{jt^{-i}}$

\end{itemize}
\end{lemma}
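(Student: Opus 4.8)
The plan is to derive all three identities from the single defining relation $srs^{-1}=r^t$, which I first rewrite in the more convenient commutation form $sr=r^ts$. Everything then reduces to careful tracking of exponents of $r$, bearing in mind that these exponents live modulo $m$ (since $r^m=1$) and that $t\in\mathbb{Z}_m^{\ast}$ is a unit, so that $t^{-1}$, and more generally $t^{-i}$, are well-defined residues modulo $m$.

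First I would establish the auxiliary identity $sr^{k}=r^{kt}s$ for every integer $k$ by induction on $k$: the base case $k=0$ is trivial, and in the inductive step $sr^{k}=(sr^{k-1})r=r^{(k-1)t}sr=r^{(k-1)t}r^{t}s=r^{kt}s$, where the induction hypothesis is used for $sr^{k-1}$ and the commutation relation $sr=r^{t}s$ is used at the final step. This single move---sliding an $s$ past a power of $r$ at the cost of multiplying the exponent by $t$---is the engine behind all three parts.

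Part $(a)$ then follows by induction on $i$. The base case $i=0$ is an identity, and for the inductive step I would write $s^{i}r^{j}=s(s^{i-1}r^{j})=s\,r^{jt^{i-1}}s^{i-1}$ using the induction hypothesis, and then apply the auxiliary identity to the leading factor $sr^{jt^{i-1}}$ to convert it into $r^{jt^{i}}s$, yielding $s^{i}r^{j}=r^{jt^{i}}s^{i}$. Part $(c)$ is then immediate by substitution: replacing $j$ with $jt^{-i}$ in $(a)$ gives $s^{i}r^{jt^{-i}}=r^{jt^{-i}t^{i}}s^{i}=r^{j}s^{i}$, which is exactly the asserted identity after rearrangement, and this is the place where invertibility of $t$ modulo $m$ is genuinely used. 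For part $(b)$ I would instead take the auxiliary identity with $k=t^{-1}$ to obtain $sr^{t^{-1}}=r^{t^{-1}t}s=rs$, conjugate to get $sr^{t^{-1}}s^{-1}=r$, and hence $s^{-1}rs=r^{t^{-1}}$.

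I do not expect any genuine obstacle here, since the content is purely formal manipulation inside the group defined by the given presentation. The only points requiring care are that all exponents of $r$ must be read modulo $m$, so that expressions such as $r^{jt^{i}}$ and $r^{jt^{-i}}$ are unambiguous, and that $t^{-1}$ (and $t^{-i}$) make sense---which is precisely the statement that $t$ is a unit in $\mathbb{Z}_m$, guaranteed because $f(x)=tx$ is an automorphism of $\mathbb{Z}_m$. Once the auxiliary sliding identity is in place, parts $(a)$, $(b)$ and $(c)$ all drop out with essentially no further work.
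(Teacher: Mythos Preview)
Your proof is correct and follows essentially the same approach as the paper: both repeatedly apply the defining relation $srs^{-1}=r^{t}$ (equivalently $sr=r^{t}s$) to slide powers of $s$ past powers of $r$, tracking the resulting $t$-power in the exponent. Your derivation of $(c)$ by direct substitution $j\mapsto jt^{-i}$ into $(a)$ is slightly slicker than the paper's separate iteration of $s^{-1}rs=r^{t^{-1}}$, but the underlying idea is identical.
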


\begin{proof}
We see that $s^{i}r^{j}=s^{i-1}(sr^{j}s^{-1})s = s^{i-1}(r^{jt})s = s^{i-2}(sr^{jt}s^{-1})s^{2} = s^{i-2}(r^{jt^{2}})s^{2}   $. Upon repeating the steps we get result $(a)$.
Now putting $i=1,j=t^{-1}$ in $(a)$ gives $(b)$. For $(c)$ observe that $r^{j}s^{i} = s(s^{-1}r^{j}s)s^{i-1} = s(r^{jt^{-1}})s^{i-1} = s^{2}(s^{-1}r^{jt^{-1}}s)s^{i-2} = s^{2}(r^{jt^{-2}})s^{i-2} $. Repeating the steps gives the result $(c)$.
\end{proof}

\begin{lemma} \label{C1}
The conjugacy class of element $r^{j} \in Inn(\mathcal{A}_{p})$ for $0 \leq j \leq p-1$ is given by $\{ r^{j},r^{jt},r^{jt^2},...$\\$...,r^{jt^{n-1}} \}$.
\end{lemma}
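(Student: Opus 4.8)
The plan is to compute the conjugates of $r^{j}$ directly from the presentation $Inn(\mathcal{A}_{p}) = \{r,s \mid r^{p}=1, s^{n}=1, srs^{-1}=r^{t}\}$, using the commutation relations established in Lemma \ref{relations}. The key structural observation is that the defining relation $srs^{-1}=r^{t}$ makes the cyclic subgroup $\langle r \rangle$ normal in $Inn(\mathcal{A}_{p})$; consequently every conjugate of $r^{j}$ is again a power of $r$, so the entire conjugacy class sits inside $\langle r \rangle$. This reduces the problem to tracking how the exponent of $r$ transforms under conjugation.

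First I would write a general group element in the normal form $g = s^{i}r^{k}$ with $0 \le i \le n-1$ and $0 \le k \le p-1$, which is available since each element of $Inn(\mathcal{A}_{p})$ has this form. Conjugating $r^{j}$ by $g$ gives
$$g\, r^{j}\, g^{-1} = s^{i}r^{k}\,r^{j}\,r^{-k}s^{-i} = s^{i}\,r^{j}\,s^{-i},$$
where the $r$-factors cancel because all powers of $r$ commute with one another. Thus the inner $r^{k}$ part of $g$ plays no role, and the conjugacy class of $r^{j}$ is exactly the set of elements $s^{i}r^{j}s^{-i}$ as $i$ varies.

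Next I would evaluate $s^{i}r^{j}s^{-i}$ using part $(a)$ of Lemma \ref{relations}, namely $s^{i}r^{j} = r^{jt^{i}}s^{i}$. Multiplying on the right by $s^{-i}$ yields $s^{i}r^{j}s^{-i} = r^{jt^{i}}$, with the exponent read modulo $p$. Letting $i$ run over $0,1,\dots,n-1$ (a complete set of residues for $s$, since $s^{n}=1$) produces precisely the set $\{r^{j}, r^{jt}, r^{jt^{2}}, \dots, r^{jt^{n-1}}\}$, which is the claimed conjugacy class.

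There is no genuine obstacle in this argument; the work is entirely bookkeeping with the relations, and the normality of $\langle r \rangle$ does the heavy lifting. The only points requiring a little care are to confirm that no conjugates are missed (handled by the normal-form argument showing conjugation depends only on $i$) and to note that for $j \ne 0$ the $n$ listed elements are in fact distinct: since $p$ is prime and $0 < j < p$, we have $jt^{i} \equiv jt^{i'} \pmod{p}$ only when $t^{i} \equiv t^{i'} \pmod{p}$, which forces $i = i'$ because $n$ is the multiplicative order of $t$ modulo $p$. For $j = 0$ the class is the singleton $\{1\}$, consistent with the formula.
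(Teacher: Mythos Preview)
Your proof is correct and follows essentially the same approach as the paper's: conjugate $r^{j}$ by a general element $s^{i}r^{k}$, observe that the $r^{k}$ factors cancel, and then apply Lemma \ref{relations}(a) to obtain $s^{i}r^{j}s^{-i}=r^{jt^{i}}$, finishing with the distinctness of the exponents modulo $p$. Your write-up is somewhat more detailed (you explicitly separate out the $j=0$ case and spell out why conjugation depends only on $i$), but the argument is the same.
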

\begin{proof}
Let $s^{i}r^{k} \in Inn(\mathcal{A}_{p}) $. Then $s^{i}r^{k}r^{j}(s^{i}r^{k})^{-1} = s^{i}r^{k}r^{j}r^{-k}s^{-i}=s^{i}r^{j}s^{-i} = r^{jt^{i}}$ using Lemma \ref{relations}. We also have $jt^{x} \not \equiv jt^{y}~(mod~p)$ for $x \neq y$. Since order of $s$ is $n$, so $0\leq i \leq n-1$. Hence we get the result.
\end{proof}

\begin{lemma}\label{C2}
The conjugacy class of element $s^{i}r^{j} \in Inn(\mathcal{A}_{p})$ is given by $\{s^{i}r^{k}~|~k = 0,1,\hdots,p-1\}$ for each $1\leq i \leq n-1$ and $0 \leq j \leq p-1 $.

\end{lemma}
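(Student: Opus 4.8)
The plan is to conjugate the generic element $s^{i}r^{j}$ by an arbitrary group element $s^{a}r^{b}$ and read off which elements appear, using the commutation relations of Lemma \ref{relations}. It is cleanest to carry out the conjugation in two stages, first by the $r$-part and then by the $s$-part, since $(s^{a}r^{b})^{-1}=r^{-b}s^{-a}$ lets me write the conjugate as $s^{a}\big(r^{b}(s^{i}r^{j})r^{-b}\big)s^{-a}$.

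First I would handle the inner conjugation by $r^{b}$. Applying relation $(c)$ to move $r^{b}$ past $s^{i}$ gives $r^{b}s^{i}=s^{i}r^{bt^{-i}}$, so that
\[
r^{b}(s^{i}r^{j})r^{-b}=s^{i}r^{\,bt^{-i}+j-b}=s^{i}r^{\,j+b(t^{-i}-1)}.
\]
This is the heart of the argument: for $1\le i\le n-1$ the order of $t$ modulo $p$ being $n$ forces $t^{i}\not\equiv 1$, hence $t^{-i}-1$ is a nonzero element of $\mathbb{Z}_{p}$ and therefore invertible because $p$ is prime. Consequently, as $b$ ranges over $\mathbb{Z}_{p}$ the exponent $j+b(t^{-i}-1)$ ranges over all of $\mathbb{Z}_{p}$, so already conjugation by the powers of $r$ produces every element of the claimed set $\{s^{i}r^{k}\mid k=0,1,\dots,p-1\}$.

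It then remains to verify the reverse containment, namely that conjugation never leaves this set. For the outer conjugation by $s^{a}$ I would use $s^{a}r^{k}s^{-a}=r^{kt^{a}}$ (the iterated form of $srs^{-1}=r^{t}$, exactly as in the proof of Lemma \ref{C1}) together with the fact that powers of $s$ commute, to obtain $s^{a}(s^{i}r^{k})s^{-a}=s^{i}r^{kt^{a}}$, which again lies in $\{s^{i}r^{k}\}$. Combining the two stages shows the conjugacy class is contained in, and by the previous paragraph equal to, $\{s^{i}r^{k}\mid k=0,1,\dots,p-1\}$. The only genuine subtlety is the invertibility step: it is precisely where primality of $p$ and the restriction $1\le i\le n-1$ enter, and it is what guarantees the class is a full coset rather than a proper subset. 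I would also note, for contrast, that when $i=0$ the factor $t^{-i}-1$ vanishes and the computation degenerates to the much smaller classes already described in Lemma \ref{C1}.
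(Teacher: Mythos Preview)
Your proof is correct and follows essentially the same route as the paper: both conjugate by powers of $r$ and use the key fact that $t^{-i}-1$ (equivalently $1-t^{i}$) is a unit modulo $p$ for $1\le i\le n-1$, which is exactly where primality enters. Your write-up is in fact more explicit than the paper's, since you verify both containments and treat the general element $s^{i}r^{j}$ directly rather than reducing to $s^{i}$.
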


\begin{proof}
First, from Lemma \ref{relations}, we observe that if $i \neq j$ then $s^{i}$ and $s^{j}$ belong to different conjugacy classes. Now let $r^{k} \in Inn(\mathcal{A}_{p})$ where $0\leq k \leq p-1$ then by using Lemma \ref{relations}, we get $r^{k}s^{i}r^{-k} = r^{k(1-t^{i})}s^{i}$. Since $p$ is prime so $1-t^{i}$ is an automorphism of $\mathbb{Z}_{p}$ for all $1 \leq i \leq n-1$. The result now follows.
\end{proof}

Thus the group $Inn(\mathcal{A}_{p})$ have a conjugacy class $\{e\}$ of size $1$, $n-1$ conjugacy classes of size $p$ and $ \frac{p-1}{n}$ conjugacy classes of size $n$.

\begin{lemma}\label{sirj}
For each $1\leq k \leq p-1$, the set $\{R_{j}^{k}~|~ 0\leq j \leq p-1\}$ corresponds to the conjugacy class $\{s^{k}r^{j}~|~0 \leq j \leq p-1\}$ of $Inn(\mathcal{A}_{p})$.
\end{lemma}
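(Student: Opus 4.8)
The plan is to reduce the statement to a single computation inside $Inn(\mathcal{A}_p)$ by invoking the normal form for $R_j^k$ already obtained in Lemma \ref{Rjk}. Writing $\sigma_k := \sum_{i=0}^{k-1} t^i$, that lemma gives
$$ R_j^k = \left(R_1 R_0^{n-1}\right)^{j\sigma_k} R_0^k = r^{j\sigma_k}\, s^k, $$
for $1 \le k \le n-1$ (so that $s^k$ is a genuine representative of one of the size-$p$ classes of Lemma \ref{C2}; recall from Lemma \ref{prodcycles} that $R_j$ has order $n$, so only these powers are relevant). Thus for each fixed $k$ the set $\{R_j^k : 0 \le j \le p-1\}$ is exactly $\{\, r^{j\sigma_k} s^k : 0 \le j \le p-1\,\}$, and everything comes down to understanding how the exponent $j\sigma_k$ runs over $\mathbb{Z}_p$ as $j$ does.

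The key step, and essentially the only place anything substantive happens, is to show that $\sigma_k$ is invertible modulo $p$. Here I would use connectedness: as recorded in Section \ref{3}, $(\mathcal{A}_p,f)$ connected forces it to be latin, hence $1-f = 1-t$ is a bijection of $\mathbb{Z}_p$, so $t \neq 1$ and $t-1$ is a unit. From the telescoping identity $(t-1)\sigma_k = t^k - 1$ I obtain $\sigma_k = (t-1)^{-1}(t^k-1)$, and since $n$ is the order of $t$ the factor $t^k-1$ is nonzero for $1 \le k \le n-1$. As $\mathbb{Z}_p$ is a field, $\sigma_k$ is therefore a unit.

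With $\sigma_k \in \mathbb{Z}_p^{\ast}$ in hand, the map $j \mapsto j\sigma_k$ is a bijection of $\mathbb{Z}_p$, so as $j$ ranges over $\{0,1,\dots,p-1\}$ the exponent $j\sigma_k$ ranges over all residues. Hence
$$ \{R_j^k : 0 \le j \le p-1\} = \{\, r^{m} s^k : 0 \le m \le p-1\,\}. $$
Finally I would match this set with a conjugacy class: by Lemma \ref{relations}$(c)$ we have $r^m s^k = s^k r^{m t^{-k}}$, and since $t^{-k}$ is a unit the set $\{r^m s^k\}$ equals $\{s^k r^{j} : 0 \le j \le p-1\}$, which is precisely the conjugacy class of $s^k$ identified in Lemma \ref{C2}. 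This yields the claimed correspondence.

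I expect the invertibility of $\sigma_k$ to be the main (really the sole) obstacle; once it is settled the remainder is bookkeeping with the relations of Lemma \ref{relations}. The one point worth flagging is the range of $k$: the argument is clean for $1 \le k \le n-1$, whereas for $k$ divisible by $n$ one has $R_j^k = e$ for every $j$ and the right-hand side degenerates, so the correspondence should be read with $s^k$ taken modulo the order $n$ of $s$.
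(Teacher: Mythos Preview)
Your argument is correct and follows essentially the same route as the paper: invoke Lemma~\ref{Rjk} to write $R_j^k = r^{j\sigma_k}s^k$, observe that $\sigma_k = \sum_{i=0}^{k-1}t^i$ is a unit in $\mathbb{Z}_p$ so that $j\mapsto j\sigma_k$ is a bijection, and conclude via the relations of Lemma~\ref{relations}. Your treatment is in fact more careful than the paper's on two counts: you justify the invertibility of $\sigma_k$ explicitly via $(t-1)\sigma_k = t^k-1$, and you correctly flag that the clean statement holds for $1\le k\le n-1$ (the paper's stated range $1\le k\le p-1$ appears to be a slip, since $\sigma_k=0$ whenever $n\mid k$).
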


\begin{proof}
Since $p$ is prime so $\sum_{i=0}^{k-1}t^{i}$ is an automorphism of $\mathbb{Z}_{p}$ for each $1 \leq k \leq p-1$. Now by using Lemma \ref{Rjk}, we get the result.
\end{proof}
We note that above lemma does not hold for quandle $(\mathcal{A}_m,f)$ with composite $m$ and same can be verified by considering Example \ref{example1}.

\section{Irreducible representations of $\mathbb{Z}_{p} \rtimes \mathbb{Z}_{n}$} \label{5}

Let $\mathbb{Z}_p$ denote the cyclic group of order $p$ and $u \in \mathbb{Z}_{p}^{\ast}$ be an element of order $n$. We denote semidirect product of $\mathbb{Z}_p$ and $\mathbb{Z}_n$ with symbol $\mathbb{Z}_p \rtimes_{\phi} \mathbb{Z}_n$ where $\phi$ is a group homomorphism defined as $\phi : \mathbb{Z}_n \to Aut(\mathbb{Z}_p)$ with $s \to \phi_s$ and $\phi_s : \mathbb{Z}_p \to \mathbb{Z}_p $ is defined as $\phi_s(r)=ur$. We observe that group $\mathbb{Z}_p \rtimes_\phi \mathbb{Z}_n$ has  presentation $\{r,s~\vert~ r^p,$\ $s^n,s^{-1}rs=r^u\}$. It follows from Lemma \ref{relations} that $u = t^{-1}$. For convenience, we write $\mathbb{Z}_p \rtimes \mathbb{Z}_n $ at the place of $\mathbb{Z}_p \rtimes_\phi \mathbb{Z}_n$.  We use Clifford theory for complete classification of irreducible representations of $\mathbb{Z}_p \rtimes \mathbb{Z}_n$.

\begin{definition}{Induced representation}: Let $G$ be a finite group and $K$ be a subgroup of $G$. Let $\phi: K \to GL(U)$ be a representation of $K$. Then induced representation of $(\phi,U)$ to $G$ is the representation $\rho : G \to GL(V)$ with 
$ V = \bigoplus_{i=1}^{n}g_{i}U $ where the set $\{g_i~|~i=1.2.\hdots,n\}$ is the transversal for $K$ in $G$.
\end{definition}

The group $G$ acts on each direct summand of $V$ in the following way :
$$g\cdot(g_{i}u) = g_{\sigma(i)}ku$$
where $g \in G$, $k \in K$, $u \in U$ and $\sigma \in S_{n}$.
We denote $Ind_{K}^{G}\phi$ with $\rho$. Note that we have $Dim~Ind_{K}^{G}{\phi} = (Dim~U) \cdot |G/K|$. 

Let $\hat{K}$ be the set consisting of irreducible representations of $K$. Then $g$-conjugate of representation $\phi$ for $g \in G,\phi \in \hat{K}$ is the representation $\prescript{g}{}{\phi} \in \hat{K}$ defined as
$$\prescript{g}{}{\phi}(k) = \phi(g^{-1}kg)$$
for all $k \in K$.
This gives rise to the following group known as inertia group $I_{G}(\phi)$ of representation $\phi~:$
$$I_{G}(\phi) = \{g \in G : \prescript{g}{}{\phi} \sim \phi\} $$
We note that  $K \subseteq I_{G}(\phi) \subseteq G$. 

The following theorem will be useful in constructing irreducible representations of $\mathbb{Z}_p \rtimes \mathbb{Z}_n$.

\begin{theorem}\cite[ Corollary 2.1]{SST09}\label{Conjugate}
 Let $K$ be normal subgroup of $G$. Then $Ind_{K}^{G} \phi $ is irreducible if and only if $I_{G}(\phi) = K$. Moreover, if $\phi, \phi_1 \in \hat{K}$ and $I_{G}(\phi) = I_{G}(\phi_{1}) = K$ then $Ind_{K}^{G}(\phi) \sim Ind_{K}^{G}(\phi_1)$ if and only if $\phi $ and $\phi_1$ are conjugate.
\end{theorem}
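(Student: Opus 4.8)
The plan is to reduce the whole statement to a computation of intertwining numbers via Frobenius reciprocity together with Mackey's restriction formula, which simplifies dramatically because $K$ is normal. First I would record the restriction of the induced module to $K$. Since $K \trianglelefteq G$, every double coset $KgK$ collapses to an ordinary coset $gK$, so Mackey's theorem yields
$$Res_K^G \, Ind_K^G \phi \;\cong\; \bigoplus_{g \in T} \prescript{g}{}{\phi},$$
where $T$ is a transversal of $K$ in $G$ and each $\prescript{g}{}{\phi}$ is the conjugate representation defined above; one can also read this decomposition directly off the defining action $g\cdot(g_i u)=g_{\sigma(i)}\phi(k)u$ by observing that each summand $g_i U$ is $K$-stable. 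Because conjugation by $g$ is an automorphism of $K$, each $\prescript{g}{}{\phi}$ is again irreducible whenever $\phi$ is, and its isomorphism class depends only on the coset $gK$, so the sum over $T$ is well defined.

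Next I would compute the endomorphism algebra of the induced module. By Frobenius reciprocity and the displayed restriction,
$$\dim Hom_G\bigl(Ind_K^G \phi, Ind_K^G \phi\bigr) = \dim Hom_K\bigl(\phi, Res_K^G Ind_K^G \phi\bigr) = \sum_{g \in T} \dim Hom_K\bigl(\phi, \prescript{g}{}{\phi}\bigr).$$
By Schur's lemma each summand equals $1$ if $\prescript{g}{}{\phi}\sim\phi$ and $0$ otherwise, so the total count is the number of cosets $gK$ with $g \in I_G(\phi)$, namely the index $[I_G(\phi):K]$. Over $\mathbb{C}$ a representation is irreducible exactly when its endomorphism algebra is one-dimensional; hence $Ind_K^G \phi$ is irreducible if and only if $[I_G(\phi):K]=1$, i.e. $I_G(\phi)=K$ (using the inclusion $K \subseteq I_G(\phi)$). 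This gives the first assertion.

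For the ``moreover'' part I would run the same reciprocity computation with two possibly different irreducibles, obtaining $\dim Hom_G(Ind_K^G\phi, Ind_K^G\phi_1) = \sum_{g\in T}\dim Hom_K(\phi,\prescript{g}{}{\phi_1})$, which is nonzero precisely when $\prescript{g}{}{\phi_1}\sim\phi$ for some $g$, that is, when $\phi$ and $\phi_1$ are conjugate. When $I_G(\phi)=I_G(\phi_1)=K$ both induced modules are irreducible by the first part, and two irreducibles are isomorphic if and only if the space of $G$-homomorphisms between them is nonzero; combining these yields the stated equivalence. The main obstacle is the first step: cleanly justifying the normal-subgroup form of Mackey's restriction formula (or, equivalently, the direct decomposition of $Res_K^G Ind_K^G\phi$ from the permutation action on the summands $g_iU$) and confirming that each $\prescript{g}{}{\phi}$ is well defined up to isomorphism on cosets. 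Once that decomposition is in hand, everything else is a formal consequence of Frobenius reciprocity and Schur's lemma.
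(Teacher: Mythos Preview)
The paper does not actually prove this theorem; it is quoted verbatim as \cite[Corollary 2.1]{SST09} and used as a black box in the subsequent analysis of $\mathbb{Z}_p \rtimes \mathbb{Z}_n$. So there is no in-paper argument to compare against.

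Your proposal is a correct and standard proof. The key steps --- the normal-subgroup form of Mackey's restriction formula giving $Res_K^G\,Ind_K^G\phi \cong \bigoplus_{gK}\prescript{g}{}{\phi}$, Frobenius reciprocity to identify $\dim Hom_G(Ind_K^G\phi,Ind_K^G\phi_1)$ with the number of cosets $gK$ for which $\prescript{g}{}{\phi_1}\sim\phi$, and Schur's lemma to reduce each summand to $0$ or $1$ --- are exactly the ingredients behind the cited corollary in \cite{SST09}; indeed this is essentially how Clifford theory derives the result. The point you flag as the ``main obstacle'' (well-definedness of $\prescript{g}{}{\phi}$ on cosets and the Mackey decomposition when $K$ is normal) is routine: normality gives $g^{-1}Kg=K$, so $\prescript{g}{}{\phi}$ is a representation of $K$, and if $g'=gk_0$ then $\prescript{g'}{}{\phi}(k)=\phi(k_0^{-1}g^{-1}kgk_0)$, which is isomorphic to $\prescript{g}{}{\phi}$ via the intertwiner $\phi(k_0)$. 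Nothing is missing.
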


Since $\mathbb{Z}_p$ is a subgroup of $\mathbb{Z}_p \rtimes \mathbb{Z}_n$, we can use the theory discussed above to obtain irreducible representations of $\mathbb{Z}_p \rtimes \mathbb{Z}_n$ using the irreducible representations of $\mathbb{Z}_p$.
An  irreducible representation of $\mathbb{Z}_{p}$ is defined as
$$\phi_{k} :  \mathbb{Z}_{p} \to \mathbb{C}^* $$
and
$\phi_{k}({m}) = e^{{i2{\pi}mk}/p}$ where $0 \leq k \leq p-1$.

We recall that $\mathbb{Z}_{p} \rtimes \mathbb{Z}_{n}$ has finite presentation $ \left\{ r,s ~|~r^p=1,s^n=1,s^{-1}rs = r^{u}\right\}$. Let $\{1,s,s^2,\hdots s^{n-1}\}$ be the transversal for $\mathbb{Z}_{p}$ in $\mathbb{Z}_{p} \rtimes \mathbb{Z}_{n}$. Let $\phi_{k}$ denote the irreducible representation of $\mathbb{Z}_{p}$ defined above, then after induction of $\phi_k$ to $\mathbb{Z}_p \rtimes \mathbb{Z}_n$, we get the induced representation $\rho_{k}$ of $\mathbb{Z}_{p} \rtimes \mathbb{Z}_{n}$ as given below
$$ \rho_{k}(r)=\begin{pmatrix}
              {\omega}^{k} & 0 & \hdots & 0\\
              0 & {\omega}^{uk} & \hdots & 0\\
              \vdots & \vdots & \ddots & \vdots\\
              0 & 0 & \hdots &  {\omega}^{u^{n-1}k}
             \end{pmatrix}       \hspace{2cm}  \rho_{k}(s)=\begin{pmatrix}
                0 & 0 & 0 & \hdots & 1\\
              1 & 0 & & \hdots & 0\\
              \vdots & \vdots &  \vdots & \ddots & \vdots\\
              0 & 0 & \hdots & 1 & 0
             \end{pmatrix}  $$

where $ \omega = e^{{i}2{\pi}/p},~  u = t^{-1}$ and  $0 \leq k \leq p-1$.

\begin{proposition}\label{Formeta}
Let $G = H \rtimes \mathbb{Z}_{n}$ where $H = \mathbb{Z}_{p}$. Then
\begin{enumerate}
\item
If $\phi$ is a trivial irreducible representation of $H$ then $I_{G}(\phi) = G$.

\item
If $\phi $ is non trivial irreducible representation of $H$ then $I_{G}(\phi) = H$.

\end{enumerate}
\end{proposition}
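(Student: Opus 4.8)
The plan is to exploit that $H=\mathbb{Z}_{p}$ is abelian, so every irreducible representation of $H$ is one--dimensional; consequently two irreducible representations of $H$ are isomorphic precisely when they are equal, and the condition $\prescript{g}{}{\phi}\sim\phi$ defining the inertia group collapses to the pointwise equality $\prescript{g}{}{\phi}=\phi$. I would also record at the outset the general containment $H\subseteq I_{G}(\phi)$: for $h'\in H$ and $h\in H$ we have $\prescript{h'}{}{\phi}(h)=\phi((h')^{-1}hh')=\phi(h)$ since $H$ is abelian, so every element of $H$ fixes $\phi$. Writing a general element of $G$ in the normal form $s^{i}r^{j}$ with $0\le i\le n-1$ and $0\le j\le p-1$, the problem then reduces to deciding, for each such $g$, whether $\prescript{g}{}{\phi}=\phi$; and since $r$ generates $H$ it suffices to compare the two representations on $r$ alone.

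Next I would compute the conjugate of $\phi$ on $r$. Iterating the defining relation $s^{-1}rs=r^{u}$ gives $s^{-i}rs^{i}=r^{u^{i}}$, and since $H$ is abelian conjugation by the $r^{j}$ factor is trivial, so
$$ (s^{i}r^{j})^{-1}\,r\,(s^{i}r^{j}) = r^{-j}s^{-i}r s^{i}r^{j} = r^{u^{i}}. $$
Therefore $\prescript{s^{i}r^{j}}{}{\phi}(r)=\phi(r^{u^{i}})$, an expression depending only on the exponent $i$.

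The two parts now follow from a short case analysis. For part (1), if $\phi$ is the trivial representation then $\phi(r^{u^{i}})=1=\phi(r)$ for every $i$, so $\prescript{g}{}{\phi}=\phi$ for all $g\in G$ and hence $I_{G}(\phi)=G$. For part (2), if $\phi=\phi_{k}$ is nontrivial, i.e.\ $k\not\equiv 0\pmod p$, then $\phi_{k}(r^{u^{i}})=\omega^{u^{i}k}$, which equals $\phi_{k}(r)=\omega^{k}$ if and only if $(u^{i}-1)k\equiv 0\pmod p$. Since $p$ is prime and $k\not\equiv 0\pmod p$, this forces $u^{i}\equiv 1\pmod p$; as $u$ has order $n$ this holds exactly when $n\mid i$, i.e.\ when $i=0$ in the chosen range. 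Thus $\prescript{g}{}{\phi_{k}}=\phi_{k}$ if and only if $g=r^{j}\in H$, giving $I_{G}(\phi_{k})=H$.

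The argument is essentially a bookkeeping computation and I do not anticipate a genuine obstacle; the only place demanding care is the conjugation formula, namely verifying that $s^{-i}rs^{i}=r^{u^{i}}$ with the correct exponent $u^{i}$ (rather than $u^{-i}$) directly from the presentation, and keeping the convention $u=t^{-1}$ from Lemma \ref{relations} consistent throughout. Once that identity is fixed, the primality of $p$ together with the order of $u$ does all the remaining work.
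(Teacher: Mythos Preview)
Your argument is correct and follows essentially the same route as the paper's own proof: both establish $H\subseteq I_G(\phi)$, reduce the question to the action of $s^i$ via the identity $s^{-i}rs^{i}=r^{u^{i}}$, and then separate the cases $k=0$ and $k\neq 0$. Your write-up is in fact a bit more careful in two places: you justify why equivalence collapses to equality for one-dimensional representations, and in part (2) you invoke directly that $u$ has multiplicative order $n$ (so $u^{i}\equiv 1\pmod p$ forces $n\mid i$), which is the cleaner formulation of the step the paper phrases via Fermat's little theorem.
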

\begin{proof}
First we see that $H \subseteq I_{G}(\phi_{k})$. Now we have $\prescript{s^j}{}{\phi_{k}}(r) = {\phi_{k}}(s^{-j}rs^{j}) = {\phi_{k}}(r^{u^{j}}) = {\phi_{u^{j}k}}(r) $. This implies
$$I_{G}(\phi_{k}) = \bigsqcup\limits_{j \in C_k}  Hs^{j}$$
where $C_{k} = \{j~:~u^{j}k \cong k~ mod~p\}$. For $k=0$, we get $I_{G}(\phi_{0}) = G$. For $k \neq 0$, using Fermat's little theorem, we get $j= p-1$ and hence $I_{G}(\phi_{k}) = H$.
\end{proof}

Since $\mathbb{Z}_p$ is normal subgroup of $\mathbb{Z}_p \rtimes \mathbb{Z}_n$, using Theorem \ref{Conjugate} and Proposition \ref{Formeta}, we get that $Ind_{H}^{G}\phi_{0}$ is reducible representation and $Ind_{H}^{G}\phi_k$ with $1\leq k \leq p-1$ are irreducible representations of $\mathbb{Z}_p \rtimes \mathbb{Z}_n$. 

Induced representation of trivial representation $\phi_0$ of $H$ splits into the following degree one irreducible representations
\begin{align*}
\zeta_{k} :& \mathbb{Z}_{p} \rtimes \mathbb{Z}_{n} \to \mathbb{C}^{\ast}\\
&\zeta_{k}(r) = 1\\
&\zeta_{k}(s) = e^{\frac{\textbf{i}2{\pi}k}{n}} \quad \quad k=0,1,...,n-1
\end{align*}

The remaining induced representations $Ind_{H}^{G}\phi_k$ with $1\leq k \leq p-1$ are not inequivalent representations. The following proposition is useful in determining inequivalent irreducible representations of $\mathbb{Z}_p \rtimes \mathbb{Z}_n$.
\begin{proposition}\label{Conjugategeneralised}
Let $G$ be group $\mathbb{Z}_{p} \rtimes \mathbb{Z}_{n}$. Let $\phi_{k}$ be complex irreducible representation of $\mathbb{Z}_{p}$. Then $\phi_{k}$ is conjugate to $\phi_{u^{i}k}$ where $1\leq i \leq n-1$.
\end{proposition}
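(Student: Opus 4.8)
The plan is to exploit the explicit conjugation formula already established inside the proof of Proposition \ref{Formeta}, where the computation $\prescript{s^{j}}{}{\phi_{k}}(r) = \phi_{u^{j}k}(r)$ was carried out. The present statement is essentially a reinterpretation of that identity: choosing the conjugating element to be $g = s^{i}$ should directly produce $\phi_{u^{i}k}$ as the $s^{i}$-conjugate of $\phi_{k}$, so the whole argument reduces to a short computation on the generator of $\mathbb{Z}_{p}$.

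First I would recall the defining relation $s^{-1}rs = r^{u}$ of $G = \mathbb{Z}_{p} \rtimes \mathbb{Z}_{n}$ and iterate it. A straightforward induction on $i$ gives $s^{-i}rs^{i} = r^{u^{i}}$ for every $1 \leq i \leq n-1$, where the exponent is read modulo $p$. This is the only preparatory step needed, and it is purely formal once the single relation between $r$ and $s$ is available.

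Next, using the definition $\prescript{g}{}{\phi}(k) = \phi(g^{-1}kg)$ with $g = s^{i}$, I would evaluate the conjugate representation on the generator $r$ of $\mathbb{Z}_{p}$:
$$\prescript{s^{i}}{}{\phi_{k}}(r) = \phi_{k}(s^{-i}rs^{i}) = \phi_{k}(r^{u^{i}}) = \omega^{u^{i}k} = \phi_{u^{i}k}(r),$$
where $\omega = e^{i2\pi/p}$ and I have used the explicit formula $\phi_{k}(r^{m}) = \omega^{mk}$. Since $\mathbb{Z}_{p} = \langle r \rangle$ is cyclic, a one-dimensional representation is completely determined by its value on $r$; hence the equality on the generator upgrades to an equality of representations $\prescript{s^{i}}{}{\phi_{k}} = \phi_{u^{i}k}$. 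By the definition of conjugacy of representations, this exhibits $\phi_{k}$ and $\phi_{u^{i}k}$ as conjugate, which is exactly the claim.

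I do not expect any genuine obstacle here: the entire argument is a one-line evaluation once the iterated commutation relation is in hand. The only point deserving (minor) care is that the identification of the two representations — rather than merely the numerical coincidence of their values at $r$ — is justified by the fact that $r$ generates $\mathbb{Z}_{p}$, so nothing further needs to be verified on the remaining group elements.
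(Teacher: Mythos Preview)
Your proposal is correct and follows essentially the same approach as the paper: the paper's proof is the one-line computation $\prescript{s^{i}}{}{\phi_{k}}(h) = \phi_{k}(s^{-i}hs^{i}) = \phi_{k}(h^{u^{i}}) = \phi_{ku^{i}}(h)$, which is exactly your evaluation on the generator $r$ written for a general $h \in \mathbb{Z}_{p}$. Your version is slightly more explicit about iterating the relation $s^{-1}rs = r^{u}$ and about why agreement on $r$ suffices, but the argument is the same.
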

\begin{proof}
We compute $\prescript{s^{i}}{}{\phi_{k}}(h) = \phi_{k}(s^{-i}hs^{i}) = \phi_{k}(h^{u^{i}}) = \phi_{ku^{i}}(h) $. Therefore $\phi_{k}$ and ${\phi_{ku^{i}}}$ are conjugate representations for $1\leq i \leq n-1$.
\end{proof}

Now together with Theorem \ref{Conjugate} and Proposition \ref{Conjugategeneralised}, we get that $\mathbb{Z}_p \rtimes \mathbb{Z}_n$ has $(p-1)/n$ inequivalent irreducible representations of degree $n$. Thus the group $Inn(\mathcal{A}_p)$ has $n$ degree one and $(p-1)/n$ degree $n$ inequivalent irreducible representations.

\section{Decomposition of quandle ring of connected quandles of prime order} \label{6}

In this section, we provide description of quandle ring of prime connected quandle as direct sum of right simple ideals. Let $\mathbb{C}$ denote the field of complex numbers and $(\mathcal{A}_{p}, f)$ denote the affine connected quandle of order $p$. We identify set $\mathcal{A}_{p}$ with $\{v_0, v_1, v_2, \dots , v_{p-1}\} \subseteq \mathbb{C}[\mathcal{A}_{p}]$, then $\mathcal{A}_{p}$ is basis of $\mathbb{C}[\mathcal{A}_{p}]$. It is easy to see that the module generated by  $v_{triv}= \sum_{i=0}^{p-1} v_{i}$ is simple module over $\mathbb{C}[\mathcal{A}_{p}]$, and we denote this module with $V_{triv}$. 

We use presentation $\{r,s~|~r^p,s^n,srs^{-1} = r^{t}\}$ of $Inn(\mathcal{A}_{p})$ by identifying $R_{1}{R_{0}}^{n-1}$ with $r$ and $R_{0}$ with $s$. The action of $Inn(\mathcal{A}_p)$ on $(\mathcal{A}_p,f)$ gives the representation $\phi_{\mathcal{A}_p} : Inn(\mathcal{A}_p) \to GL(\mathbb{C}[\mathcal{A}_p])$.

\begin{lemma}\label{Characterprimelatin}
Let $\chi_{\mathcal{A}_{p}}$ be character of the representation $\phi_{\mathcal{A}_{p}}$ defined as above. Then we have
$$\chi_{\mathcal{A}_{p}}(g)= \begin{cases}
                  p \quad \quad \text{ if } g=1\\
                  1 \quad \quad \text { if } g =s^{i}r^{j} \quad 1 \leq i \leq n-1, 0\leq j \leq p-1 \\
                  0 \text \quad \quad \text{ if } g =r^{j}  \quad  1 \leq j \leq p-1
                 \end{cases}$$
\end{lemma}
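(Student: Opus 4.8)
The plan is to recognize that $\phi_{\mathcal{A}_p}$ is a permutation representation: the group $Inn(\mathcal{A}_p)$ acts on the basis $\mathcal{A}_p = \{v_0, \ldots, v_{p-1}\}$ of $\mathbb{C}[\mathcal{A}_p]$ by permutations, so for every $g \in Inn(\mathcal{A}_p)$ the matrix $\phi_{\mathcal{A}_p}(g)$ is a permutation matrix and $\chi_{\mathcal{A}_p}(g)$ equals the number of fixed points of $g$ acting on $\mathcal{A}_p$. The whole computation therefore reduces to counting fixed points in the three cases. For $g=1$ every basis vector is fixed, giving $\chi_{\mathcal{A}_p}(1)=p$. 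For $g=r^j$ with $1\le j\le p-1$, I would use the formula $r^j(x)=x+j(1-t)$ established in the proof of Lemma \ref{InnAp}: a fixed point would require $j(1-t)\equiv 0 \pmod p$, which is impossible since $1-t$ is invertible (the quandle is connected, hence latin) and $j\not\equiv 0$. Thus $r^j$ is fixed-point-free and $\chi_{\mathcal{A}_p}(r^j)=0$.

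The only case requiring genuine work is $g=s^i r^j$ with $1\le i\le n-1$. Since $\chi_{\mathcal{A}_p}$ is a class function, by Lemma \ref{C2} it suffices to evaluate it on one representative of the conjugacy class $\{s^i r^k : 0\le k\le p-1\}$, and by Lemma \ref{sirj} this class is realized by the maps $R_{j'}^{\,i}$. I would then count the fixed points of $R_{j'}^{\,i}$. Directly, Lemma \ref{Rjk} gives $R_{j'}^{\,i}(x)=t^i x+(\sum_{l=0}^{i-1}t^l)(1-t)j'$, so $R_{j'}^{\,i}(x)=x$ forces $(t^i-1)x=-(\sum_{l=0}^{i-1}t^l)(1-t)j'$; because $1\le i\le n-1$ and $n$ is the order of $t$, we have $t^i\not\equiv 1\pmod p$, so $t^i-1$ is a unit and there is exactly one solution $x$. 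Equivalently, by Lemma \ref{prodcycles} each $R_{j'}$ fixes the single point $j'$ and permutes the remaining $p-1$ points in $(p-1)/n$ cycles of length $n$; raising to the $i$-th power with $n\nmid i$ fixes none of the points lying on those $n$-cycles, leaving only $j'$. Either way $R_{j'}^{\,i}$ has exactly one fixed point, whence $\chi_{\mathcal{A}_p}(s^i r^j)=1$.

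I expect the main obstacle to be the bookkeeping in the $s^i r^j$ case, namely combining class-invariance with Lemma \ref{sirj} to pass from the abstract group element $s^i r^j$ to the concrete permutation $R_{j'}^{\,i}$, and then running the unit/cycle-length argument that isolates the unique fixed point. The identity and $r^j$ cases are immediate once the permutation-character viewpoint is in place.
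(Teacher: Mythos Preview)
Your proposal is correct and follows essentially the same approach as the paper: recognize $\chi_{\mathcal{A}_p}$ as a permutation character counting fixed points, then handle the three cases using Lemma \ref{InnAp} for $r^j$ and Lemmas \ref{sirj} and \ref{prodcycles} for $s^i r^j$. Your write-up is in fact more detailed than the paper's---you spell out the formula $r^j(x)=x+j(1-t)$ and give an explicit direct fixed-point computation for $R_{j'}^{\,i}$ via Lemma \ref{Rjk} as an alternative to the cycle-structure argument---but the strategy is the same.
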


\begin{proof}
From Lemma \ref{C1} and \ref{C2}, we get that the group $Inn(\mathcal{A}_{p})$ has $n + (p-1)/n$ conjugacy classes, namely
$$\{1\}, \quad \{r^{j},r^{jt},r^{jt^2},\hdots,r^{jt^{n-1}}\}, \quad \{ s^{i}r^{j}; 1 \leq i \leq n-1, 0 \leq j \leq p-1\}.$$
Note that $\chi_{\mathcal{A}_p}(g) $ is equal to the number of fixed points of permutation $g \in Inn(\mathcal{A}_p)$. First, clearly $\chi_{\mathcal{A}_{p}}(1) = p $ and by using Lemma \ref{InnAp} we get that $r^{j}$ is a cycle of length $p$, so we have $\chi_{\mathcal{A}_{p}}(r^j) = 0$. From Lemma \ref{prodcycles} and \ref{sirj}, we have $\chi_{\mathcal{A}_{p}}(s^{i}r^{j}) = 1$
for all  $ 1 \leq i \leq n-1, 0 \leq j \leq p-1$.
\end{proof}

We denote the degree $n$ irreducible representations of $Inn(\mathcal{A}_{p})$ by the set $\{ \psi_j ~|~  1\leq j \leq (p-1)/n \}$ and the simple module associated with the representation $\psi_{j}$ by $V_j$. The following theorem gives decomposition of quandle ring $\mathbb{C}[\mathcal{A}_p]$ into direct sum of right simple ideals.

\begin{theorem} \label{Thm}
Let $\mathbb{C}$ be the field of complex numbers and $(\mathcal{A}_{p},f)$ denote the connected quandle of order $p$. Then
$$\mathbb{C}[\mathcal{A}_{p}] = V_{triv} \oplus \bigoplus_{j=1}^{(p-1)/n} V_{j}$$
\end{theorem}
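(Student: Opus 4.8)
The plan is to decompose the representation $\phi_{\mathcal{A}_p}$ into irreducibles by computing, for each irreducible character $\chi$ of $Inn(\mathcal{A}_p)$, its multiplicity in $\chi_{\mathcal{A}_p}$ via the standard inner product
$$\langle \chi_{\mathcal{A}_p},\chi\rangle = \frac{1}{pn}\sum_{g \in Inn(\mathcal{A}_p)} \chi_{\mathcal{A}_p}(g)\,\overline{\chi(g)}.$$
By the remark in Section \ref{2}, a decomposition of $\phi_{\mathcal{A}_p}$ into irreducibles transfers verbatim to a decomposition of $\mathbb{C}[\mathcal{A}_p]$ into right simple ideals, so it suffices to work at the level of $Inn(\mathcal{A}_p) = \mathbb{Z}_p \rtimes \mathbb{Z}_n$. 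The decisive input is Lemma \ref{Characterprimelatin}: the character $\chi_{\mathcal{A}_p}$ is supported only on the identity class, where it equals $p$, and on the classes $\{s^i r^j\}$ with $1 \le i \le n-1$, where it equals $1$; it vanishes on every nontrivial $r$-class. Thus each multiplicity computation only ever sees these classes, and I will never need the full character table of $Inn(\mathcal{A}_p)$ on the $r$-classes.

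First I would dispose of the $n$ degree-one representations $\zeta_k$ classified in Section \ref{5}. Since $v_{triv} = \sum_i v_i$ is fixed by the permutation action, $V_{triv}$ is exactly the isotypic component of the trivial representation $\zeta_0$. For $\zeta_0$ the inner product reduces to $\tfrac{1}{pn}\bigl(p + (n-1)p\bigr) = 1$, so $V_{triv}$ occurs with multiplicity one. For $k \neq 0$, as $\zeta_k$ is a homomorphism with $\zeta_k(r)=1$, we have $\zeta_k(s^i r^j) = \zeta_k(s)^i$, so the classes $\{s^i r^j\}$ contribute $p\sum_{i=1}^{n-1}\zeta_k(s)^{-i}$. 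Since $\zeta_k(s)$ is a nontrivial $n$th root of unity for $k \neq 0$, the full geometric sum $\sum_{i=0}^{n-1}\zeta_k(s)^{-i}$ vanishes, whence $\sum_{i=1}^{n-1}\zeta_k(s)^{-i} = -1$ and this contribution equals $-p$. Together with the identity contribution $p$ this yields multiplicity $0$, so none of the nontrivial degree-one representations appears.

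Next I would handle the $(p-1)/n$ degree-$n$ representations $\psi_j$, whose modules are the $V_j$. Using the explicit induced matrices $\rho_k$ displayed in Section \ref{5}, the matrix $\rho_k(s)$ is a cyclic permutation matrix of order $n$, so for $1 \le i \le n-1$ the product $\rho_k(s)^i\rho_k(r)^j$ is a nonzero-entry-off-diagonal matrix and hence $\chi_{\psi_j}(s^i r^j)=0$ on precisely the classes (other than the identity) where $\chi_{\mathcal{A}_p}$ is nonzero. Consequently the only surviving term in the inner product is the identity, giving $\langle \chi_{\mathcal{A}_p},\chi_{\psi_j}\rangle = \tfrac{1}{pn}\cdot p\cdot n = 1$. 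Thus each $V_j$ occurs with multiplicity exactly one.

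Finally a dimension count closes the argument: the summands found so far contribute $\dim V_{triv} + \sum_{j=1}^{(p-1)/n}\dim V_j = 1 + \tfrac{p-1}{n}\cdot n = p = \dim \mathbb{C}[\mathcal{A}_p]$, leaving no room for further constituents, which forces the stated decomposition. I expect the only delicate point to be the justification that $\chi_{\psi_j}(s^i r^j)=0$ for $1\le i \le n-1$, i.e. reading the vanishing diagonal of $\rho_k(s)^i\rho_k(r)^j$ off the permutation-matrix form of $\rho_k(s)$; once this is in hand, the argument is a routine application of orthogonality of characters, made especially short by the small support of $\chi_{\mathcal{A}_p}$.
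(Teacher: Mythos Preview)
Your proposal is correct and follows essentially the same approach as the paper: both compute the multiplicities $\langle \chi_{\mathcal{A}_p},\chi_j\rangle$ using Lemma~\ref{Characterprimelatin} together with the description of the irreducibles from Section~\ref{5}, and then finish with a dimension count. Your version is in fact more explicit than the paper's, since you spell out why $\chi_{\psi_j}(s^ir^j)=0$ for $1\le i\le n-1$ (via the permutation-matrix form of $\rho_k(s)$) and separately verify that the nontrivial linear characters $\zeta_k$ have multiplicity zero, whereas the paper simply asserts $\langle \chi_{\mathcal{A}_p},\chi_j\rangle=1$ and absorbs the linear characters into the dimension check.
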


\begin{proof}
We prove the result by showing that the representation $\psi_j$ has multiplicity one in representation $\phi_{\mathcal{A}_p}$ for all $1 \leq j \leq (p-1)/n$.  Let $\chi_{j}$ be the character of representation $\psi_{j}$  for $1 \leq j \leq (p-1)/n$, then by using Lemma \ref{Characterprimelatin} and representation theory of $Inn(\mathcal{A}_p)$ from section \ref{5}, we compute

 $$ \langle \chi_{\mathcal{A}_{p}}, \chi_j \rangle = \frac{1}{np}\sum_{g\in Inn(\mathcal{A}_{p})} \chi_{\mathcal{A}_{p}}(g) \chi_j(g)
  = 1.$$
Now by using \cite[Th. 14.17]{JL01}, we get that for each $1 \leq j \leq (p-1)/n$, $V_{j}$ appears in $\mathbb{C}[\mathcal{A}_{p}]$ with multiplicity one. On verifying dimensions on both sides, we get 
$$\mathbb{C}[\mathcal{A}_{p}] = V_{triv} \oplus \bigoplus_{j=1}^{(p-1)/n} V_{j}$$

\end{proof}

\section{Tensor product of connected quandles of prime order}  \label{7}

In this section, we describe the tensor product of connected quandles of order $p$. The concept of tensor product of quandles is introduced in \cite{SK21}. Let $G$ be a group and let $X,Y$ be quandles equipped with the right and left $G-$actions respectively. Now define the following equivalence relation on $X \times Y$ 
$$\left( x\cdot g,y \right) \sim (x,g\cdot y) \quad \text{ for } x \in X,y \in Y \text{ and } g \in G$$
Then the tensor product $X {\otimes}_{G} Y$ is defined as the set of all equivalence classes $[(x,y)]$ of above defined action on the set $X \times Y$. The canonical tensor product of quandles is studied in \cite{SK21}. We study the same in this article.  We recall the definition below.

Let $X$ be a quandle and $F(X)$ denote the free group generated by $X$. The right and left action of $F(X)$ on $X$ is defined as
$$x \cdot g = (((x~{\triangleright}^{{\epsilon}_{1}}~x_{1})~{\triangleright}^{{\epsilon}_{2}}~x_{2})\hdots)~{\triangleright}^{{\epsilon}_{m}}~x_{m}$$

$$g \cdot x = x \cdot g^{-1} = (((x~{\triangleright}^{-\epsilon_m}~x_m)~{\triangleright}^{-\epsilon_{m-1}}~x_{m-1})\cdots )~\triangleright^{-\epsilon_1}~x_1 $$

respectively, where $x \in X$ and $ g = {x_1}^{{\epsilon}_{1}}x_{2}^{{\epsilon}_{2}}\cdots{x_m}^{{\epsilon}_{m}} \in F(X)$.
 
The above defined action is known as the canonical group action of $F(X)$ on $X$ and $X \otimes_{F(X)} X$ is called the canonical tensor product. For convenience, we omit the subscript and use symbol $X \otimes X$ to denote the tensor product.

We note that the above defined action of $F(X)$ on $X$ has the same orbit structure as to the natural action of $Inn(X)$ on $X$. Now, by using \cite[Equation 12]{SK21}, we get that the tensor product $X \otimes X$ is the set of orbits of $X \times X$ under the right action of $Inn(X)$.

Let $\tau : X \times X \to X \times X$ be the involution map defined as $\tau(x,y) = (y,x)$ for all $x,y \in X$. This map induces an involution of $X \otimes X$ as $\tau[( x,y )] = [( y,x )]$. We use the $X \otimes X/ \langle \tau \rangle$ to denote the equivalence classes of $X \otimes X$ modulo equivalence relation of $\tau$. Now we recall some results related with transitive group action. We begin with the definition of rank of transitive group action.

\begin{definition}
Let $ \alpha : G \times X \to X$ be a transitive group action. This induces group action on $X \times X$ in following way
$${\alpha_{g}^{2}}(x,y) = (\alpha_{g}(x),\alpha_{g}(y)) \quad \quad \text{ for all }(x,y) \in X \times X.$$
The number of orbits of the group action $\alpha^{2}$ is called the rank of $\alpha$.
\end{definition}

\begin{lemma}\cite[Corollary 7.2.10]{BS11}\label{BS7.1}
Suppose $\alpha : G \times X \to X$ is a transitive group action. Let $\tilde{\alpha} : G \to GL(C[X])$ be the associated  permutation representation and $\chi_{\tilde{\alpha}}$ be the character associated to $\tilde{\alpha}$. Then
$$\text{rank}(\alpha) = \langle \chi_{\tilde{\alpha}},\chi_{\tilde{\alpha}} \rangle  $$

\end{lemma}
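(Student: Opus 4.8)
The plan is to invoke the Cauchy--Frobenius (Burnside) orbit-counting lemma together with the elementary fact that a permutation character records fixed-point counts. First I would recall that for the permutation representation $\tilde{\alpha}$ of $G$ on $\mathbb{C}[X]$, the character value $\chi_{\tilde{\alpha}}(g)$ equals the number of points of $X$ fixed by $g$, since $\tilde{\alpha}(g)$ is a permutation matrix whose trace counts precisely the fixed basis vectors.

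The key step is to relate the induced diagonal action $\alpha^2$ on $X \times X$ to $\alpha$ at the level of fixed points. A pair $(x,y)$ is fixed by $g$ under $\alpha^2$ exactly when $\alpha_{g}(x)=x$ and $\alpha_{g}(y)=y$, so the set of $g$-fixed pairs is the product of the set of $g$-fixed points of $X$ with itself. Passing to characters, this reads $\chi_{\widetilde{\alpha^2}}(g) = \chi_{\tilde{\alpha}}(g)^{2}$, where $\widetilde{\alpha^2}$ denotes the permutation representation attached to $\alpha^2$.

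Next I would apply the orbit-counting lemma to the finite set $X \times X$ under $\alpha^2$: the number of orbits equals $\frac{1}{|G|}\sum_{g \in G}\chi_{\widetilde{\alpha^2}}(g)$. By definition this number of orbits is exactly $\mathrm{rank}(\alpha)$. Substituting the fixed-point identity from the previous step yields $\mathrm{rank}(\alpha) = \frac{1}{|G|}\sum_{g \in G}\chi_{\tilde{\alpha}}(g)^{2}$. Finally, since a permutation character is real-valued (indeed a non-negative integer at every $g$), we have $\chi_{\tilde{\alpha}}(g)^{2} = \chi_{\tilde{\alpha}}(g)\overline{\chi_{\tilde{\alpha}}(g)}$, so the right-hand sum is precisely $\langle \chi_{\tilde{\alpha}}, \chi_{\tilde{\alpha}} \rangle$, which is the asserted equality.

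There is no serious obstacle here; the argument is a textbook combination of Burnside's lemma with the fixed-point interpretation of permutation characters. The only point demanding a little care is the identity $\chi_{\tilde{\alpha}}(g)^{2} = \chi_{\tilde{\alpha}}(g)\overline{\chi_{\tilde{\alpha}}(g)}$, which rests on the character being real. I note that transitivity of $\alpha$ is needed only to match the stated definition of rank, as the orbit count on $X \times X$ does not itself require it.
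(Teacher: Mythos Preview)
Your proof is correct. The paper itself does not supply a proof of this lemma: it is quoted as \cite[Corollary 7.2.10]{BS11} and used as a black box. Your argument via the Cauchy--Frobenius orbit-counting lemma together with the fixed-point interpretation of the permutation character is exactly the standard derivation one finds in that reference, so there is nothing to compare against and no divergence in approach.
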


\begin{remark} \label{remark}

Let $G$ be finite group and $\phi_{1},\phi_{2},...,\phi_{n} $ be the irreducible representations of $G$. Then we have $\tilde{\alpha} = {m_1}\phi_1 \oplus {m_2}\phi_2 \oplus \cdots \oplus {m_n}\phi_n $ where $m_{i} \in \mathbb{Z}_{\geq 0}$. Then using Lemma \ref{BS7.1}, we get
$$\text{rank}(\alpha) = \sum_{i=1}^{n} m_{i}^{2}$$

\end{remark}

The following result gives the size of the tensor product of connected quandles of prime order using the decomposition of quandle rings $\mathbb{C}[\mathcal{A}_{p}]$. Recall that we denote the order of $f \in Aut(\mathbb{Z}_{p})$ with $n$.

\begin{theorem}
Let $(\mathcal{A}_{p},f)$ be the connected quandle of order $p$. Then 
$$|\mathcal{A}_{p} \otimes \mathcal{A}_{p}| = 1+(p-1)/n$$

\end{theorem}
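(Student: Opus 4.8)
The plan is to identify the cardinality $|\mathcal{A}_p \otimes \mathcal{A}_p|$ with the rank of the natural $Inn(\mathcal{A}_p)$-action on $\mathcal{A}_p$, and then read off that rank directly from the multiplicity-free decomposition established in Theorem \ref{Thm}.

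First I would recall from the discussion preceding the statement (via \cite[Equation 12]{SK21}) that $\mathcal{A}_p \otimes \mathcal{A}_p$ is precisely the set of orbits of $\mathcal{A}_p \times \mathcal{A}_p$ under the diagonal right action of $Inn(\mathcal{A}_p)$. Since $(\mathcal{A}_p, f)$ is connected, the action of $Inn(\mathcal{A}_p)$ on $\mathcal{A}_p$ is transitive; call it $\alpha$. By the definition of rank, the number of orbits of the induced action $\alpha^2$ on $\mathcal{A}_p \times \mathcal{A}_p$ is exactly $\text{rank}(\alpha)$, and therefore $|\mathcal{A}_p \otimes \mathcal{A}_p| = \text{rank}(\alpha)$.

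Next I would observe that the permutation representation $\tilde{\alpha} : Inn(\mathcal{A}_p) \to GL(\mathbb{C}[\mathcal{A}_p])$ associated to $\alpha$ coincides with the representation $\phi_{\mathcal{A}_p}$ studied in Section \ref{6}, whose character is computed in Lemma \ref{Characterprimelatin}. Theorem \ref{Thm} gives the decomposition $\mathbb{C}[\mathcal{A}_p] = V_{triv} \oplus \bigoplus_{j=1}^{(p-1)/n} V_j$, which is multiplicity free: in the notation of Remark \ref{remark} every multiplicity $m_i$ is either $0$ or $1$, and precisely $1 + (p-1)/n$ of them equal $1$, namely the trivial constituent $V_{triv}$ together with the $(p-1)/n$ distinct degree-$n$ summands $V_j$. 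Applying Lemma \ref{BS7.1} and Remark \ref{remark} then yields
$$\text{rank}(\alpha) = \langle \chi_{\tilde{\alpha}}, \chi_{\tilde{\alpha}} \rangle = \sum_i m_i^2 = 1 + (p-1)/n,$$
and combining this with $|\mathcal{A}_p \otimes \mathcal{A}_p| = \text{rank}(\alpha)$ gives the claim.

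I do not expect a serious obstacle, since all the representation-theoretic work has already been done: the size of the tensor product is governed entirely by the sum of squares of the multiplicities, and these are all $0$ or $1$ by Theorem \ref{Thm}. The only point requiring care is confirming that the $Inn(\mathcal{A}_p)$-action underlying the canonical tensor product is the same transitive action whose permutation character is $\chi_{\mathcal{A}_p}$, so that the multiplicities furnished by Theorem \ref{Thm} may be inserted directly into the rank formula of Remark \ref{remark}.
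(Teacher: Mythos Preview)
Your proposal is correct and follows essentially the same approach as the paper: invoke Theorem \ref{Thm} to see that all multiplicities $m_i$ are $0$ or $1$ with exactly $1+(p-1)/n$ of them nonzero, and then apply Remark \ref{remark} to conclude. The paper's proof is terser but the logical content is identical.
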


\begin{proof}
Using Theorem \ref{Thm}, we get that $m_i =0$ or $m_i = 1$. Furthermore, we get $\sum_{i}m_{i}^2 = 1 + (p-1)/n$. Now the result follows from Remark \ref{remark}.
\end{proof}
Now we provide description of elements of $\mathcal{A}_{p} \otimes \mathcal{A}_{p}$. We begin with defining the following two types of sets

$$A(k) = \{(i,i+tk),(i,i+{t^2}k)\hdots,(i,i+{t^n}k)~|~ i \in \mathcal{A}_{p}\}$$
and 
$$\tilde{A}(k) = \{(i+tk,i),(i+{t^2}k,i)\hdots,(i+{t^n}k,i)~|~i \in \mathcal{A}_{p}\}$$
for $k = 0,1,\hdots,p-1$. 

For $k=0$, we have $A(0) = \tilde{A}(0)$. We name elements of form  $(i,i)$ as diagonal elements and $A(0)$ as diagonal orbit. Now, we discuss some examples of the tensor product.
\begin{ex}
Let $X = (\mathcal{A}_{13},f)$ be affine quandle with $t=8$, then elements of $X \otimes X$ have following properties:
\begin{equation*}
\begin{array}{ll}
~A(0) = [(0,0)] &  \\ 
~A(1) = [(0,1)]  = [(0,5)] = [(0,8)]  = [(0,12)]\\
~A(2) = [(0,2)]  = [(0,3)] = [(0,10)] = [(0,11)] \\
~A(4) = [(0,4)]  = [(0,6)] = [(0,7)]  = [(0,9)]\\ 
\end{array}
\end{equation*}
We observe that order of $f$ is $4$, which is even. We have $A(1) = \tilde{A}(1),A(2) = \tilde{A}(2),A(4) = \tilde{A}(4)$. This gives
\begin{equation*}
X \otimes X =\{ [(0,0)],[(0,1)],[(0,2)],[(0,4)] \}. 
\end{equation*}
and
\begin{equation*}
X \otimes X / \langle \tau \rangle =\{ [(0,0)],[(0,1)],[(0,2)],[(0,4)] \}. 
\end{equation*}
\end{ex}

\begin{ex}
Let $X = (\mathcal{A}_{13},f)$ be affine quandle with $t=9$. Then elements of $X \otimes X$ have following properties:
\begin{equation*}
\begin{array}{ll}
~A(0) = [(0,0)] &  \\ 
~A(1) = [(0,1)]  = [(0,3)]  = [(0,9)] \\
~A(2) = [(0,2)]  = [(0,5)]  = [(0,6)] \\
~A(4) = [(0,4)]  = [(0,10)] = [(0,12)] \\
~A(7) = [(0,7)]  =  [(0,8)] = [(0,11)]\\ 
\end{array}
\end{equation*}
In this case order of $f$ is $3$, which is odd. We have $A(1) \neq \tilde{A}(1) ,A(2) \neq \tilde{A}(2),A(4) \neq \tilde{A}(4),A(7) \neq \tilde{A}(7)$. We have $\tilde{A}(1) = A(4),\tilde{A}(2) = A(7)$. This gives 
\begin{equation*}
X \otimes X =\{ [(0,0)],[(0,1)],[(0,2)],[(0,4)],[(0,7)] \}. 
\end{equation*} 
and
\begin{equation*}
X \otimes X / \langle \tau \rangle =\{ [(0,0)],[(0,1)],[(0,2)] \}. 
\end{equation*}
\end{ex}

Similar to the case of dihedral quandles in \cite{SK21}, we define $d_{p} : \mathcal{A}_{p} \times \mathcal{A}_{p} \to \mathbb{Z}_{\geq 0}$ to be the distance function on $\mathcal{A}_{p} = \mathbb{Z}_{p}$ induced from the distance on $\mathbb{Z}$ given by $d_{0} : \mathbb{Z} \times \mathbb{Z} \to \mathbb{Z}_{\geq 0} $ with $d_{0}(x,y) = |x-y|$. Let $g \in Inn(\mathcal{A}_{p})$ then using Lemma \ref{InnAp} and Lemma \ref{sirj}, we have either $g = R_{j}^{l}$ for some $1 \leq l \leq n-1$ and $0 \leq j \leq p-1$ or $g= \left( {R_1}{R_0}^{n-1} \right)^{j'}$ for $ 0 \leq j' \leq p-1$. Now we have 
\begin{equation}\label{eqdist1}
d_{p}(x\cdot g,y\cdot g) = t^{l}d_{p}(x,y)  \quad \text{ for any } x,y \in \mathcal{A}_{p} \text{ and } g = R_{j}^{l} \in Inn(\mathcal{A}_{p})
\end{equation}
and 
\begin{equation} \label{eqdist2}
d_{p}(x\cdot g,y\cdot g) = d_{p}(x,y)  \quad \text{ for any } x,y \in \mathcal{A}_{p} \text{ and } g = \left( {R_1}{R_0}^{n-1} \right)^{j'} \in Inn(\mathcal{A}_{p})
\end{equation}

\begin{lemma} \label{distance}
Let $( \mathcal{A}_{p},f)$ be the connected quandle of prime order with $gcd(n,2)=2$. Let $k \in \{0,1,\hdots p-1\}$. If $d_{p}(x,y) = t^{i}k$ for some $1 \leq i \leq n$  then there exist an element $g \in Inn(\mathcal{A}_{p})$ with $x \cdot g = 0 $ and $y\cdot g =k$.

\begin{proof}
Since ( $\mathcal{A}_{p},f)$ is connected quandle, for any $x \in \mathcal{A}_{p}$ there exist an element $g \in Inn(\mathcal{A}_{p})$ such that $x \cdot g =0$. The result is true for $k=0$.

Now suppose $0 < k \leq p-1$ and $d_{p}(x,y) = t^{i}k$. Let $g' = \left( {R_1}{R_0}^{n-1} \right)^{j'}  \in Inn(\mathcal{A}_{p})$ be an element with $x\cdot g' = 0$. Such $g'$ exist because $g'$ is a cycle of length $p$. From equation \ref{eqdist2}, we have $d_{p}(x \cdot g',y \cdot g') = t^{i}k$. Choose $g''= R_{0}^{l}$ such that we have $i+l=n$. Now let $g_1=g''g'$, then by equation \ref{eqdist1}, we have either $y\cdot g_1 = t^{i+l}k=k $ and so  $g = g_1$ or $y\cdot g_1 = -t^{i+l}k=-k$ then $g = g_1R_{0}^{m}$, where $R_{0}^{m}(-k) = k$. Such $m$ exists if $gcd(n,2) = 2$. Thus in this case $A(k) = \tilde{A}(k)$ and $A(k) \subseteq [(0,k)]$ for each $k = 0,1,\hdots,p-1$. If $gcd(n,2)=1$, then such $m$ does not exists and we get $A(k) \subseteq [(0,k)]$ and $\tilde{A}(k) \subseteq [(k,0)]$.
\end{proof}

\end{lemma}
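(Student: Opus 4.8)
The plan is to reduce an arbitrary pair $(x,y)$ to the normal form $(0,k)$ in two stages, exploiting the two kinds of generators of $Inn(\mathcal{A}_p)$ coming from Lemma \ref{InnAp}: the translation $r = R_1 R_0^{n-1}$, which preserves distance by equation \ref{eqdist2}, and the dilation $s = R_0$, which scales distance by a power of $t$ via equation \ref{eqdist1} and, crucially, fixes the point $0$. The case $k=0$ is immediate: connectedness gives some $g$ with $x\cdot g = 0$, and $d_p(x,y) = t^i\cdot 0 = 0$ forces $x=y$, so $y\cdot g = 0 = k$.

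First I would translate $x$ to $0$ by a distance-preserving element. Since $1-t$ is a unit modulo $p$ (the standing assumption that $1-f$ is a bijection), the element $r = R_1 R_0^{n-1}$ acts as a single $p$-cycle, so there is a power $g' = r^{j'}$ with $x\cdot g' = 0$. Because $r$ preserves distance (equation \ref{eqdist2}), we still have $d_p(0, y\cdot g') = t^i k$; as $d_p(0,z)$ records $z$ only up to sign, this means $y\cdot g' = \pm t^i k$ in $\mathbb{Z}_p$.

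Next I would rescale using the dilation $s = R_0$, which fixes $0$ and sends $z \mapsto tz$. Post-composing $g'$ with $s^l$ for $l = n-i$ keeps the first coordinate at $0$ and sends the second to $t^{i+l}(\pm k) = t^n(\pm k) = \pm k$ by equation \ref{eqdist1}, producing an element $g_1$ with $x\cdot g_1 = 0$ and $y\cdot g_1 = \pm k$. If the sign is $+$, the proof is finished with $g = g_1$.

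The main obstacle, and the only place the parity hypothesis $\gcd(n,2) = 2$ enters, is the leftover sign ambiguity. When $y\cdot g_1 = -k$, I would apply a further power of $s = R_0$ to flip the sign while fixing $0$: I need $m$ with $t^m = -1$, i.e. $-1 \in \langle t\rangle \le \mathbb{Z}_p^{\ast}$. Since $t$ has order $n$, the subgroup $\langle t\rangle$ is the unique subgroup of order $n$, and it contains the unique order-two element $-1$ exactly when $2 \mid n$; concretely one takes $m = n/2$, so that $t^{n/2} = -1$ and $R_0^{n/2}(-k) = k$, giving $g = g_1 R_0^{n/2}$. I expect verifying the equivalence ``$-1 \in \langle t\rangle \iff n$ even'' to be the conceptual heart of the argument, everything else being bookkeeping with equations \ref{eqdist1} and \ref{eqdist2}; note that when $n$ is odd no such $m$ exists, which is precisely why $A(k)$ and $\tilde{A}(k)$ then fall into distinct orbits.
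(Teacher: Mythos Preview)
Your proposal is correct and follows essentially the same approach as the paper: translate $x$ to $0$ via a power of $r=R_1R_0^{n-1}$ (which preserves distance), rescale via a power of $s=R_0$ (which fixes $0$) to bring the second coordinate to $\pm k$, and then use a further power of $R_0$ to flip the sign when $n$ is even. Your treatment is slightly more explicit than the paper's in identifying $m=n/2$ and justifying why $-1\in\langle t\rangle$ precisely when $2\mid n$, but the structure of the argument is identical.
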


\begin{theorem}
Let ( $\mathcal{A}_{p},f)$ be the affine connected quandle of order $p$. Then the tensor product $\mathcal{A}_{p} \otimes \mathcal{A}_{p}$ consists of $1+(p-1)/n$ elements of form  $A(k), k \in \mathcal{A}_{p}$ where
$$A(k) = \{(i,i+tk),(i,i+t^{2}k)\cdots (i,i+t^{n}k)~|~i \in \mathcal{A}_{p}\}$$
\end{theorem}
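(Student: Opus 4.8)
The plan is to use the identification, recalled earlier in this section, of $\mathcal{A}_p \otimes \mathcal{A}_p$ with the orbits of the diagonal right action of $Inn(\mathcal{A}_p)$ on $\mathcal{A}_p \times \mathcal{A}_p$, and to prove that each set $A(k)$ is exactly one such orbit. The first ingredient is an invariance statement: by equations \eqref{eqdist1} and \eqref{eqdist2}, every $g \in Inn(\mathcal{A}_p)$ sends a pair $(x,y)$ to a pair whose difference is $t^{l}(y-x)$ for some $l$, since the maps $R_j$ scale differences by $t$ while the translations $(R_1 R_0^{n-1})^{j'}$ fix them. Consequently the whole orbit of $(0,k)$ consists of pairs with difference in $\langle t\rangle\cdot k=\{t^m k : 0\le m\le n-1\}$, which gives the inclusion $[(0,k)] \subseteq A(k)$.

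For the reverse inclusion I would invoke Lemma \ref{distance}: it shows that any pair $(x,y)$ with $d_{p}(x,y)=t^{i}k$ can be carried by a suitable $g \in Inn(\mathcal{A}_p)$ to $(0,k)$, which is precisely the statement $A(k)\subseteq [(0,k)]$. Combining the two inclusions yields $A(k)=[(0,k)]$, so each $A(k)$ is a single orbit and hence a genuine element of the tensor product. Here connectedness enters decisively, both to guarantee that the translations act transitively as a cycle of length $p$ and to license the reduction carried out in Lemma \ref{distance}.

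It then remains to count the distinct sets $A(k)$, which are indexed by the orbits of the multiplicative action of $\langle t\rangle$ on $\mathcal{A}_p=\mathbb{Z}_p$. Since the difference set of $A(k)$ is $\{tk,t^2 k,\dots,t^n k\}=\langle t\rangle\cdot k$ and $t^n\equiv 1\pmod p$, we have $A(k)=A(tk)$, so the number of distinct sets equals the number of $\langle t\rangle$-orbits on $\mathbb{Z}_p$: the fixed point $0$ contributes the diagonal orbit $A(0)$, and as $t$ has order $n$ and $p$ is prime the action on $\mathbb{Z}_p^{\ast}$ is free, giving $(p-1)/n$ orbits of size $n$. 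This produces exactly $1+(p-1)/n$ distinct sets $A(k)$, matching the cardinality already obtained from the quandle-ring decomposition.

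The step I expect to require the most care is the invariance inclusion $[(0,k)]\subseteq A(k)$: one must verify that scaling the difference by powers of $t$ is the only effect of the action on the difference coordinate, so that no element of the orbit escapes its difference class. Lemma \ref{distance} supplies the opposite inclusion almost immediately, so once invariance is pinned down the identification $A(k)=[(0,k)]$ and the final count follow routinely.
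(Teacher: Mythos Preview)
Your proposal is correct and follows essentially the same route as the paper: both use equations \eqref{eqdist1}--\eqref{eqdist2} together with Lemma \ref{distance} to identify each $A(k)$ with the orbit $[(0,k)]$, and then count the distinct $A(k)$'s via the cosets of $\langle t\rangle$ in $\mathbb{Z}_p^{\ast}$. One small caveat worth noting: Lemma \ref{distance} as stated carries the hypothesis $\gcd(n,2)=2$, but its proof records the inclusion $A(k)\subseteq[(0,k)]$ in the odd case as well, which is all that you (and the paper) actually need here.
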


\begin{proof}
It is easy to see that $A(0)$ is an element of $\mathcal{A}_p \otimes \mathcal{A}_p$ and also $A(0),A(1),\hdots,$\ $A(p-1)$ cover $\mathcal{A}_{p} \times \mathcal{A}_{p}$. From Lemma \ref{distance} and equation \ref{eqdist1}, we get that $A(k)$ is an element of $\mathcal{A}_p \otimes \mathcal{A}_p$. However for $k \neq k'$, it is possible that $A(k) =  A(k') $. We can choose  $(p-1)/n$ number of distinct $A(k)$'s which cover $\mathcal{A}_p \otimes \mathcal{A}_p$. The set of such $k$ values is equal to set of coset representatives of subgroup $K= \langle t \rangle$ of group $\mathbb{Z}_{p}^{\ast}$. Since $A(k)$ has elements with distance $d(x,y)=t^{l}k, 1 \leq l \leq n$, so if $k_1,k_2 \in \mathbb{Z}_{p}^{\ast} / K$ such that $k_1K \neq k_2K$ then we have $A(k_1) \neq A(k_2)$.  

\end{proof}

\begin{theorem}
Let ( $\mathcal{A}_{p},f)$ be the connected quandle of order $p$ with $gcd(n,2)=2$. Then the tensor product $\mathcal{A}_{p} \otimes \mathcal{A}_{p} / \langle \tau \rangle$ consists of $1+(p-1)/n$ elements of form  $A(k)$.
\end{theorem}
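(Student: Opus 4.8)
The plan is to show that under the hypothesis $\gcd(n,2)=2$ the induced involution $\tau$ acts trivially on $\mathcal{A}_p \otimes \mathcal{A}_p$, so that passing to the quotient by $\langle\tau\rangle$ merges no distinct classes and hence preserves the cardinality $1+(p-1)/n$ established in the previous theorem.

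First I would record how $\tau$ interacts with the orbits. Since $\tau(i,\,i+t^l k)=(i+t^l k,\,i)$, the involution carries the orbit $A(k)$ precisely onto $\tilde{A}(k)$; that is, $\tau\big([(0,k)]\big)=[(k,0)]$ and more generally $\tau(A(k))=\tilde{A}(k)$ for every $k$. Hence $\tau$ fixes the class $A(k)$ if and only if $A(k)=\tilde{A}(k)$. The key step is then to invoke Lemma \ref{distance}, which under the hypothesis $\gcd(n,2)=2$ yields exactly $A(k)=\tilde{A}(k)$ for all $k\in\{0,1,\dots,p-1\}$. Combining the two gives $\tau(A(k))=A(k)$ for every $k$, so $\tau$ is the identity map on $\mathcal{A}_p \otimes \mathcal{A}_p$ and the quotient map is a bijection; invoking $|\mathcal{A}_p \otimes \mathcal{A}_p| = 1+(p-1)/n$ then completes the count.

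I do not expect a serious obstacle here, since the substantive work is already contained in Lemma \ref{distance}, where the parity condition produces the element $R_0^m$ sending $-k$ to $k$ that collapses $\tilde{A}(k)$ onto $A(k)$. The only point requiring care is the clean separation between the set-level statement that $\tau$ maps $A(k)$ to $\tilde{A}(k)$ and the orbit-level identity $A(k)=\tilde{A}(k)$, so that the conclusion that $\tau$ fixes each class follows without circularity.
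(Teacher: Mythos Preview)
Your proposal is correct and follows essentially the same route as the paper: both invoke Lemma~\ref{distance} (together with the distance relation in equation~\ref{eqdist1}) to obtain $A(k)=\tilde{A}(k)$ under $\gcd(n,2)=2$, deduce that $\tau$ fixes every class so that $\mathcal{A}_p\otimes\mathcal{A}_p=\mathcal{A}_p\otimes\mathcal{A}_p/\langle\tau\rangle$, and then read off the count $1+(p-1)/n$ from the preceding theorem. Your write-up is in fact a bit more explicit about why $\tau(A(k))=\tilde{A}(k)$, which is a nice touch.
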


\begin{proof}
 For $gcd(n,2) =2$, using  Lemma \ref{distance} and equation \ref{eqdist1}, we get that $[(0,k)] = [(k,0)]$ and so $A(k) = \tilde{A}(k)$. This implies $\tau(A(k)) = A(k)$ for all $k \in \mathcal{A}_p$. Therefore $ \mathcal{A}_{p} \otimes \mathcal{A}_{p}  = \mathcal{A}_{p} \otimes \mathcal{A}_{p} / \langle \tau \rangle$.
\end{proof}

\begin{theorem}
Let ( $\mathcal{A}_{p},f)$ be the connected quandle of order $p$  with $gcd(n,2)=1$. Then the tensor product $\mathcal{A}_{p} \otimes \mathcal{A}_{p} / \langle \tau \rangle$ consists of $A(0)$ and $(p-1)/2n$ elements of form $A(k)$.
\end{theorem}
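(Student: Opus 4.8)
The plan is to understand how the involution $\tau$ permutes the $1+(p-1)/n$ classes $A(k)$ furnished by the earlier description of $\mathcal{A}_p \otimes \mathcal{A}_p$, and then count the resulting equivalence classes. Recall that the non-diagonal classes $A(k)$, $k \neq 0$, are indexed by the cosets of the subgroup $K = \langle t \rangle \leq \mathbb{Z}_p^{\ast}$, of which there are $(p-1)/n$, while $A(0)$ is the diagonal class. Since $\tau[(x,y)] = [(y,x)]$ sends $A(k)$ to $\tilde{A}(k)$, the whole problem reduces to locating $\tilde{A}(k)$ among the $A(k')$ and then tallying the fixed points and $2$-cycles of $\tau$.

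First I would record that the $Inn(\mathcal{A}_p)$-orbit of a pair $(x,y)$ with $x \neq y$ is completely determined by the coset $(y-x)K$: the translations $\left( R_1 R_0^{n-1} \right)^{j'}$ preserve the difference $y-x$ by equation \ref{eqdist2}, whereas $R_0^{l}$ multiplies it by $t^{l}$ by equation \ref{eqdist1}, and since $1-t$ is a bijection every value of the first coordinate is attainable. For $A(k)$ the difference $y-x$ lies in $kK$, while for $\tilde{A}(k)$ it lies in $-kK$; hence $\tilde{A}(k) = A(-k)$, so $\tau$ acts on the index set by the rule $kK \mapsto (-1)\,kK$, and it fixes $A(0)$ since $A(0) = \tilde{A}(0)$.

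The counting step is then group-theoretic. Because $n = |K|$ is odd and $-1$ has order $2$ in $\mathbb{Z}_p^{\ast}$, we have $-1 \notin K$; Lemma \ref{distance} confirms this by exhibiting $A(k) \subseteq [(0,k)]$ and $\tilde{A}(k) \subseteq [(k,0)]$ as distinct orbits when $\gcd(n,2)=1$. Thus the image of $-1$ is an order-$2$ element of the cyclic group $\mathbb{Z}_p^{\ast}/K$, so multiplication by $-1$ is a fixed-point-free involution on the $(p-1)/n$ non-diagonal classes; in particular $(p-1)/n$ is even and these classes are paired into $(p-1)/(2n)$ two-element $\tau$-orbits. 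Adjoining the single fixed class $A(0)$ gives exactly $1 + (p-1)/(2n)$ classes in $\mathcal{A}_p \otimes \mathcal{A}_p / \langle \tau \rangle$, as claimed.

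The main obstacle is the middle step: making rigorous the assertion that the orbits are classified by the difference-coset $(y-x)K$, and thereby pinning down $\tilde{A}(k)$ as $A(-k)$. Once that identification is secured, the parity argument showing $-1 \notin K$ and the orbit count are routine.
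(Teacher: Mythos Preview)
Your proof is correct and proceeds along the same lines as the paper's, with a more explicit algebraic framing. The paper's argument simply invokes Lemma~\ref{distance} and equation~\eqref{eqdist1} to conclude that $[(0,k)]\neq[(k,0)]$, hence $A(k)\neq\tilde A(k)$, when $\gcd(n,2)=1$, and then observes that $\tau$ pairs the $(p-1)/n$ non-diagonal classes into $(p-1)/2n$ pairs. You sharpen this by first identifying the orbit invariant as the coset $(y-x)K$, which pins down $\tilde A(k)=A(-k)$ exactly, and then argue directly that $-1\notin K=\langle t\rangle$ because $-1$ has order~$2$ in $\mathbb{Z}_p^{\ast}$ while $|K|=n$ is odd; thus $kK\mapsto -kK$ is a fixed-point-free involution on $\mathbb{Z}_p^{\ast}/K$. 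Both routes rest on the same fact (that $-1$ is not a power of $t$ when $n$ is odd), but your version makes the $\tau$-action on the orbit labels explicit and gives a self-contained parity count, whereas the paper leaves this implicit inside Lemma~\ref{distance}.
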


\begin{proof}
Since $gcd(n,2)=1$, so using Lemma \ref{distance}, and equation \ref{eqdist1}, we get that  $[(0,k)] \neq [(k,0)]$ and so $A(k) \neq \tilde{A}(k)$ for $k \neq 0$. This gives us $\tau(A(k)) = \tilde{A}(k)$ and hence $\mathcal{A}_{p} \otimes \mathcal{A}_{p} / \langle \tau \rangle$ consists of $A(0)$ and $(p-1)/2n$ elements of form $A(k)$.
\end{proof}

\section{Multiplicity freeness of quandle ring decomposition of connected quandles} \label{8}

Let $G$ be a finite group and $X$ be a finite set. Suppose $G$ acts on $X$ transitively. This gives group homomorphism $\rho : G \to GL(\mathbb{C}[X])$.

\begin{definition}
Let $\phi : G \to GL(V)$ be a representation of group $G$. The representation $\phi$ is called multiplicity free if it contains irreducible representations of $G$ with multiplicity at most one. In other words $\phi = \oplus_{i=1}^{n} \phi_i$ where $\phi_i$ irreducible and $\phi_i \nsim \phi_j$ for $i \neq j$.
\end{definition}
The study of multiplicity freeness of representation $\rho$ is an important area in group theory. We refer to \cite[Chapter 4]{SST08} for more details. 

Suppose a group $G$ acts on sets $X$ and $Y$. We say the sets $X$ and $Y$ are isomorphic as $G-$sets if there is a bijective map $f : X \to Y$ satisfying $f(g\cdot x) = g \cdot f(x)~\forall~g \in G,~x \in X$.

\begin{lemma}\cite[Lemma 3.1.6]{SST08}
Let group $G$ acts on $X$ transitively and $K$ denotes the stabilizer of an arbitrary $x_0 \in X$. Then $X$ and $G / K$ are isomorphic as $G-$sets.
\end{lemma}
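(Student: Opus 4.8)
The final statement to prove is the standard fact that a transitive $G$-set $X$ is isomorphic, as a $G$-set, to the coset space $G/K$ where $K=\mathrm{Stab}(x_0)$. Let me sketch the proof.

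The plan is to write down the natural map explicitly and verify it is a $G$-set isomorphism. First I would fix an arbitrary $x_0 \in X$ and let $K = \{g \in G : g\cdot x_0 = x_0\}$ be its stabilizer. Define a map $f : G/K \to X$ by $f(gK) = g \cdot x_0$. The first step is to check this is well defined: if $gK = g'K$ then $g^{-1}g' \in K$, so $(g^{-1}g')\cdot x_0 = x_0$, which gives $g'\cdot x_0 = g\cdot x_0$, and hence $f$ does not depend on the choice of coset representative.

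Next I would verify the three required properties. Injectivity follows by reversing the well-definedness argument: if $g\cdot x_0 = g'\cdot x_0$ then $(g^{-1}g')\cdot x_0 = x_0$, so $g^{-1}g' \in K$ and thus $gK = g'K$. Surjectivity is exactly the transitivity hypothesis: for any $y \in X$ there exists $g \in G$ with $g\cdot x_0 = y$, so $y = f(gK)$. Finally, $G$-equivariance is immediate from the definition of the $G$-action on $G/K$ by left multiplication, since $f(h\cdot gK) = f((hg)K) = (hg)\cdot x_0 = h\cdot(g\cdot x_0) = h\cdot f(gK)$ for all $h \in G$. Combining these, $f$ is a $G$-equivariant bijection, i.e. an isomorphism of $G$-sets.

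This argument has no real obstacle; it is a routine verification and the only point requiring any care is keeping the well-definedness and injectivity arguments logically separate (they use the same computation read in opposite directions). If one wanted to construct the inverse map explicitly, one would send $y \in X$ to $gK$ for any $g$ with $g\cdot x_0 = y$; the well-definedness of this inverse is again the injectivity statement above. I would therefore present the single map $f$ and check the three conditions, as this is the cleanest route.
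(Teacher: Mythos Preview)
Your proof is correct and is exactly the standard argument. The paper itself does not supply a proof of this lemma; it merely cites it from \cite[Lemma 3.1.6]{SST08}, so there is nothing to compare beyond noting that your argument is the usual one found in any reference on group actions.
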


\begin{remark} \label{rmeq}
It follows from above lemma that representation $\rho : G \to GL(\mathbb{C}[X])$ is equivalent to representation $\rho' : G \to GL(\mathbb{C}[G/K])$.

\end{remark}

\begin{definition}{Double Coset}: Let $H$ and $K$ be subgroups of $G$.  Then for each $g$ in $G$, the set $HgK = \{hgk~|~h \in H, k \in K\}$ is called an $(H,K)-$ double coset of $g$. We use symbol $H \backslash G/K$ to denote the set of all $(H,K)-$double cosets.
\end{definition}

\begin{definition}{Gelfand Pair}: Let $G$ be a group and $K$ be a subgroup of $G$. The pair $(G,K)$ is called Gelfand pair if the algebra $\mathbb{C}[K\backslash G/K]$ is commutative.

\end{definition}
The following theorem gives the relation between Gelfand pair and multiplicity freeness of the corresponding representation of group $G$.

\begin{theorem}\cite[Theorem 4.4.2]{SST08} \label{mf}
Let $G$ be a finite group and $K$ be a subgroup of $G$. Then the following are equivalent
\begin{itemize}
\item[(i)]
$(G,K)$ is a Gelfand pair.
\item[(ii)]
$Hom_G(\mathbb{C}[G/K],\mathbb{C}[G/K])$ is commutative.
\item[(iii)]
The representation $\rho' : G \to GL(\mathbb{C}[G/K])$ is multiplicity free.
\end{itemize}
\end{theorem}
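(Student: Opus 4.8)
The plan is to split the three-fold equivalence into the two independent links $(ii)\Leftrightarrow(iii)$ and $(i)\Leftrightarrow(ii)$. The first link is pure representation theory via Schur's lemma, while the second is the classical identification of the double coset (Hecke) algebra with the commutant of the permutation representation. Throughout I write $A=\mathbb{C}[G]$ and let $e_K=\frac{1}{|K|}\sum_{k\in K}k\in A$ be the idempotent attached to $K$, so that the permutation module $\mathbb{C}[G/K]$ is realized as the left ideal $Ae_K$ (via $gK\mapsto ge_K$) and $\rho'$ is left multiplication by $G$.

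For $(ii)\Leftrightarrow(iii)$, I would decompose $\mathbb{C}[G/K]=\bigoplus_i m_iV_i$ into pairwise inequivalent irreducible $G$-modules $V_i$ with multiplicities $m_i$. Since $\mathbb{C}$ is algebraically closed, Schur's lemma gives $\mathrm{Hom}_G(V_i,V_j)=0$ for $i\ne j$ and $\mathrm{Hom}_G(V_i,V_i)\cong\mathbb{C}$, whence $\mathrm{Hom}_G(\mathbb{C}[G/K],\mathbb{C}[G/K])\cong\bigoplus_i\mathrm{Mat}_{m_i}(\mathbb{C})$ as algebras. A direct sum of matrix algebras is commutative if and only if every block has size at most one, i.e.\ $m_i\le 1$ for all $i$; this is exactly the assertion that $\rho'$ is multiplicity free, which proves $(ii)\Leftrightarrow(iii)$.

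For $(i)\Leftrightarrow(ii)$, I would invoke the standard ring isomorphism $\mathrm{End}_A(Ae_K)\cong(e_KAe_K)^{\mathrm{op}}$, valid for any ring $A$ and idempotent $e_K$, which sends a left-$A$-linear endomorphism $\phi$ to the element $\phi(e_K)\in e_KAe_K$ and turns composition into opposite multiplication. The subalgebra $e_KAe_K$ consists of the $K$-bi-invariant elements of $\mathbb{C}[G]$, has a basis indexed by the double cosets in $K\backslash G/K$, and under convolution is precisely the double coset algebra $\mathbb{C}[K\backslash G/K]$. Because an algebra is commutative exactly when its opposite is, the chain $\mathrm{Hom}_G(\mathbb{C}[G/K],\mathbb{C}[G/K])\cong(e_KAe_K)^{\mathrm{op}}\cong\mathbb{C}[K\backslash G/K]^{\mathrm{op}}$ shows that $(ii)$ holds if and only if $\mathbb{C}[K\backslash G/K]$ is commutative, which is $(i)$.

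The main obstacle is the bookkeeping in the second link. One must verify that left multiplication by $G$ on $Ae_K$ really recovers $\rho'$, and that the abstract isomorphism $\mathrm{End}_A(Ae_K)\cong(e_KAe_K)^{\mathrm{op}}$ matches the convolution product on the space of $K$-bi-invariant functions, keeping track of the opposite-algebra convention and the normalizing factor $1/|K|$ so that the indicator of a double coset $KgK$ (equivalently $e_Kge_K$) corresponds to the intended basis element. Once the map on double cosets is checked to be an algebra (anti-)isomorphism onto the commutant, commutativity transfers transparently and all three conditions align.
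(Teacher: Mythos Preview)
The paper does not supply its own proof of this statement: it is quoted verbatim as \cite[Theorem 4.4.2]{SST08} and used as a black box. Your argument is correct and is essentially the standard textbook proof (and indeed the one in the cited reference): the equivalence $(ii)\Leftrightarrow(iii)$ via Schur's lemma and the block decomposition $\mathrm{Hom}_G(\mathbb{C}[G/K],\mathbb{C}[G/K])\cong\bigoplus_i\mathrm{Mat}_{m_i}(\mathbb{C})$, and the equivalence $(i)\Leftrightarrow(ii)$ via the idempotent realization $\mathbb{C}[G/K]\cong Ae_K$ together with $\mathrm{End}_A(Ae_K)\cong(e_KAe_K)^{\mathrm{op}}\cong\mathbb{C}[K\backslash G/K]^{\mathrm{op}}$. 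Your caveats about the opposite-algebra convention and the normalization of $e_K$ are well placed but routine to check, so there is no genuine obstacle.
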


Recall that a connected quandle is a quandle $X$ such that the action of $Inn(X)$ on $X$ is transitive.  We prove that for an affine connected quandle $X$, quandle ring $\mathbb{C}[X]$ decomposes multiplicity free. We begin with the following result. 

\begin{lemma}\cite[ Theorem 6.1]{BDS17}
Let $X=(A,f)$ denote the affine connected quandle. Then $Inn(X) \cong A \rtimes \langle f \rangle$.
\end{lemma}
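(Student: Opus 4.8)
The plan is to realise every element of $Inn(X)$ as an affine transformation of $A$ and then to match the resulting subgroup with $A \rtimes \langle f \rangle$. Writing the operation as $x \triangleright y = f(x) + (id_A - f)(y)$, each right translation has the form $R_y(x) = f(x) + (id_A-f)(y)$; that is, $R_y$ is the affine map with linear part $f$ and translation part $c_y := (id_A - f)(y)$. I would encode such maps as pairs $(c,f^k)$ acting on $A$ by $(c,f^k)\cdot x = f^k(x)+c$, so that these pairs constitute $A \rtimes \langle f \rangle$ under the rule $(c,f^k)(d,f^l) = (c + f^k(d),\, f^{k+l})$. A one-line computation shows this action is faithful: if $f^k(x)+c = x$ for all $x$, then $x=0$ gives $c=0$ and hence $f^k = id_A$, so $k \equiv 0 \pmod n$. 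Thus $\Phi : A \rtimes \langle f \rangle \to \mathrm{Sym}(A)$ is an embedding, and it suffices to prove that its image coincides with $Inn(X)$ as a subgroup of $\mathrm{Sym}(A)$.

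One inclusion is immediate. Each generator $R_y$ equals $\Phi(c_y, f)$, so $Inn(X) \subseteq \Phi(A \rtimes \langle f \rangle)$ since the right-hand side is a group. For the reverse inclusion I would exhibit both factors inside $Inn(X)$. First, $R_0 = \Phi(0,f)$, so $\langle R_0 \rangle$ reproduces the $\langle f \rangle$-part. Next, using $R_0^{\,n-1} = \Phi(0,f^{n-1})$ one computes $R_y R_0^{\,n-1} = \Phi(c_y,f)\Phi(0,f^{n-1}) = \Phi(c_y, id_A)$, which is the pure translation by $(id_A-f)(y)$.

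This is exactly where connectedness enters, and I expect it to be the crux of the argument. Since $X$ is connected it is latin (as recalled in Section~\ref{3}), so $id_A - f$ is bijective by \cite[Corollary~7.2]{HSV16}; equivalently, by Lemma~\ref{fpfree} the map $f$ is fixed point free. Hence as $y$ ranges over $A$ the elements $(id_A-f)(y)$ exhaust $A$, and the full translation subgroup $T := \Phi(\{(c,id_A) : c \in A\}) \cong A$ is contained in $Inn(X)$. Everything else is bookkeeping with affine maps; the only genuinely delicate point is this surjectivity of $id_A - f$, which fails without the latin hypothesis.

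Finally I would assemble the semidirect product internally. The subgroup $T$ is normal in $\Phi(A \rtimes \langle f \rangle)$ (it is the kernel of the projection onto $\langle f \rangle$, and conjugation sends translations to translations), hence normal in $Inn(X)$; moreover $T \cap \langle R_0 \rangle = \{e\}$, since $\Phi(0,f^k) = \Phi(c,id_A)$ forces $c = 0$ and $f^k = id_A$. Because any $\Phi(c,f^k) = \Phi(c,id_A)\,R_0^{\,k}$ lies in $\langle T, R_0\rangle$, the set $T \cup \{R_0\}$ generates the whole image. Therefore $Inn(X) = T \rtimes \langle R_0 \rangle$ with $T \cong A$ and $\langle R_0 \rangle \cong \langle f \rangle$, which yields $Inn(X) \cong A \rtimes \langle f \rangle$ as claimed. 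This also specialises to Lemma~\ref{InnAp} upon taking $A = \mathbb{Z}_m$ and $\langle f \rangle \cong \mathbb{Z}_n$.
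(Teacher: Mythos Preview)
The paper does not supply its own proof of this lemma; it is stated with a citation to \cite[Theorem~6.1]{BDS17} and used as a black box in the proof of Theorem~\ref{affmultfree}. So there is nothing in the paper to compare your argument against.

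Your argument is correct and is in fact the standard one: identify $Inn(X)$ with a subgroup of the affine group $A \rtimes Aut(A)$, observe that the generators $R_y$ land in $A \rtimes \langle f \rangle$, and then use bijectivity of $id_A - f$ (equivalently, the latin property from Lemma~\ref{fpfree}) to recover the full translation subgroup inside $Inn(X)$. One small remark: when you write $R_0^{\,n-1} = \Phi(0,f^{n-1})$ you are implicitly assuming $f$ has finite order~$n$. This is harmless in the present paper since everything is finite, but if you want the statement at the generality of \cite{BDS17} you can simply replace $R_0^{\,n-1}$ by $R_0^{-1}$ throughout; then $R_y R_0^{-1} = \Phi(c_y,f)\Phi(0,f^{-1}) = \Phi(c_y,id_A)$ and no finiteness of $\langle f \rangle$ is needed.
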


\begin{definition}
A quandle $X$ is called homogeneous quandle if the action of group $Aut(X)$ on $X$ is transitive.
\end{definition}
Let $G$ be a group, $\phi \in Aut(G)$ and $H$ is a subgroup of $ C_{G}(\phi) = \{g \in G~:~\phi(g)=g\}$, then the set of right cosets $G/H$ forms a quandle with operation  $Hx \triangleright Hy = H\phi(xy^{-1})y$ and it is denoted by $Q_{Hom}(G,H,\phi)$. It is proved in \cite{HSV16} that a quandle is homogeneous if and only if it is of form $Q_{Hom}(G,H,\phi)$.

We note that connected quandles are homogeneous quandles. Moreover affine quandle $(A,f)$ is homogeneous quandle of form $Q_{Hom}(A,\{0\},f)$.

In the case, when $G$ acts transitively of set $X$, the quandle $Q_{Hom}(G,H,\phi)$ can be stated as of form $Q_{Hom}(G,G_e,\phi)$ for some $e \in X$ and  stabilizer $G_e$ of $e$. If a connected quandle $X$ is represented as homogeneous quandle using $G = Inn(X)$ then such a representation is called canonical representation. (See \cite{HSV16} for more details.) 

We recall that semidirect product of groups $G$ and $K$  with $G$ as normal subgroup is defined as $G \rtimes K$ with multiplication $(g_1,k_1)(g_2,k_2) = (g_1\psi_{k_1}(g_2),k_1k_2)$ where $\psi : K \to Aut(G)$ is a group homomorphism.

\begin{lemma}\label{GP}
Let $G$ be an abelian group then $(G \rtimes K,K)$ is a Gelfand pair.
\end{lemma}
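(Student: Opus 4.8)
The plan is to prove commutativity of the Hecke algebra $\mathbb{C}[K\backslash (G\rtimes K)/K]$ directly, which is exactly the definition of a Gelfand pair, by means of Gelfand's trick. Write $H = G\rtimes K$ with the multiplication recalled above, $(g_1,k_1)(g_2,k_2) = (g_1\,\psi_{k_1}(g_2),\,k_1k_2)$, and identify $K$ with $\{(e,k) : k\in K\}\le H$. The strategy is to exhibit an anti-automorphism $\theta$ of $H$ satisfying $\theta(K)=K$ that stabilizes every $(K,K)$-double coset setwise. Given such a $\theta$, the map $f\mapsto f\circ\theta$ on bi-$K$-invariant functions reverses convolution, $ (f_1*f_2)\circ\theta = (f_2\circ\theta)*(f_1\circ\theta)$, while acting as the identity on $\mathbb{C}[K\backslash H/K]$ since each double coset is $\theta$-stable; hence $f_1*f_2 = f_2*f_1$ and the Hecke algebra is commutative (this is the standard Gelfand criterion, see \cite{SST08}). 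I would therefore only need to produce $\theta$ and identify the double cosets.

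First I would compute the double cosets explicitly. A direct calculation gives $(e,a)(g,k)(e,b) = (\psi_a(g),\,akb)$, so
$$K(g,k)K = \{(\psi_a(g),\,k') : a\in K,\ k'\in K\}.$$
The second coordinate fills all of $K$ and the first coordinate sweeps out the $K$-orbit of $g$ under $\psi$; moreover the two coordinates vary independently. Consequently the double coset $K(g,k)K$ depends only on the $K$-orbit of $g$ in $G$, and the double cosets $K\backslash H/K$ are in bijection with the orbits of $K$ acting on $G$ via $\psi$.

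Next I would define $\theta(g,k) = (\psi_{k^{-1}}(g),\,k^{-1})$. A short computation using that $G$ is abelian shows $\theta\big((g_1,k_1)(g_2,k_2)\big) = \theta(g_2,k_2)\,\theta(g_1,k_1)$, so $\theta$ is an anti-automorphism; it clearly fixes $K$ setwise. Since the $K$-orbit of $\psi_{k^{-1}}(g)$ coincides with the $K$-orbit of $g$ (as $a k^{-1}$ ranges over $K$ when $a$ does), $\theta$ sends the double coset attached to the orbit of $g$ back to itself, i.e. it stabilizes every double coset. Invoking the criterion above then yields that $(G\rtimes K,K)$ is a Gelfand pair.

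The one genuinely delicate point is the choice of anti-automorphism. Naive inversion $(g,k)\mapsto(g,k)^{-1}$ fails in general, because the double coset of the inverse corresponds to the orbit of $g^{-1}$ rather than of $g$, and these orbits can differ (for instance when the action has odd order and $-1$ is not a power of the acting automorphism, matching the odd-$n$ phenomenon seen for the tensor product). Twisting inversion by the automorphism $(g,k)\mapsto(\psi_{k^{-1}}(g),k)$ repairs exactly this discrepancy, and the hypothesis that $G$ is abelian is precisely what guarantees the twisted map reverses products. Identifying this twist, and verifying it is order-reversing, is the main obstacle; once it is in hand the remaining verifications are routine.
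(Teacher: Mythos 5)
Your proof is correct, and it takes a genuinely different route from the paper's. The paper argues entirely inside the group: using commutativity of $G$ it rewrites $(1,k_1)(g_1,1)(1,k_2)(g_2,1)(1,k_3)$ as $(1,k_1k_2)(g_2,1)(1,k_2^{-1})(g_1,1)(1,k_2k_3)$ and deduces the two inclusions giving $K(g_1,1)K(g_2,1)K = K(g_2,1)K(g_1,1)K$ for all $g_1,g_2\in G$; this is elementary and self-contained, needing neither the convolution formalism nor any description of the double cosets. You instead run Gelfand's trick: you first compute the double cosets explicitly, $K(g,k)K=\{(\psi_a(g),k') : a,k'\in K\}$, so that $K\backslash(G\rtimes K)/K$ is indexed by the $K$-orbits on $G$, and then exhibit the twisted-inversion anti-automorphism $\theta(g,k)=(\psi_{k^{-1}}(g),k^{-1})$, which is order-reversing precisely because $G$ is abelian, maps $K$ to $K$, and fixes each double coset since $\psi_{k^{-1}}(g)$ lies in the $K$-orbit of $g$ (all of these verifications check out, and $\theta$ is even an involution). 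The two approaches buy different things. Your orbit description of the cosets is exactly the combinatorics underlying Section \ref{7} (the classes $A(k)$), and your diagnosis of why naive inversion fails --- the orbit of $g^{-1}$ need not equal the orbit of $g$ --- is the same phenomenon as $A(k)\neq\tilde{A}(k)$ for odd $n$; moreover your $\theta$ amounts to showing the pair is weakly symmetric via the automorphism $\sigma(g,k)=(g^{-1},k)$, so you could compress your write-up by citing \cite[Lemma 4.3.4]{SST08}, the very criterion the paper invokes later in this section for quandles with $X\otimes X = X\otimes X/\langle\tau\rangle$. Conversely, note that the paper's displayed identity is sharper than the set equality it is quoted for: the substitution $(k_1,k_2,k_3)\mapsto(k_1k_2,k_2^{-1},k_2k_3)$ is a bijection of $K^3$, which is what matches convolution coefficients (not merely supports of the products) and hence yields commutativity of $\mathbb{C}[K\backslash(G\rtimes K)/K]$ exactly as your anti-automorphism does, with no Hecke-algebra machinery at all.
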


\begin{proof} 
We prove the result by showing commutativity of the double cosets i.e. $K(g_1,1)K(g_2,1)K=K(g_2,1)K(g_1,1)K$ for all $g_1,g_2 \in G$. Let $k_1,k_2,k_3 \in K$. Then we have
\begin{align*}
(1,k_1)(g_1,1)(1,k_2)(g_2,1)(1,k_3) &= (1,k_1)(g_1,1)({\psi}_{k_2}(g_2),k_2)(1,k_3) \\
&= (1,k_1)(g_1{\psi}_{k_2}(g_2),k_2)(1,k_3)\\
&=(1,k_1)({\psi}_{k_2}(g_2)g_1,k_2)(1,k_3)\\
&=(1,k_1)(1,k_2)(g_2,1)(1,k_{2}^{-1})(g_1,1)(1,k_2)(1,k_3)\\
\end{align*}
This implies $ K(g_1,1)K(g_2,1)K \subseteq K(g_2,1)K(g_1,1)K$. Similarly we have 
\begin{align*}
(1,k_1)(g_2,1)(1,k_2)(g_1,1)(1,k_3) &= (1,k_1)(g_2,1)(\psi_{k_2}(g_1),k_2)(1,k_3) \\
&= (1,k_1)(g_2{\psi}_{k_2}(g_1),k_2)(1,k_3) \\
&= (1,k_1)({\psi}_{k_2}(g_1)g_2,k_2)(1,k_3)\\
&= (1,k_1)(1,k_2)(g_1,1)(1,k_2^{-1})(g_2,1)(1,k_2)(1,k_3)
\end{align*}
This implies $K(g_2,1)K(g_1,1)K  \subseteq K(g_1,1)K(g_2,1)K$. Hence $(G \rtimes K,K))$ is a Gelfand pair.
\end{proof}

\begin{theorem} \label{affmultfree}
Let $(A,f)$ be affine connected quandle of finite order. Then quandle ring $\mathbb{C}[A]$ decomposes multiplicity free as direct sum of right simple ideals. 
\end{theorem}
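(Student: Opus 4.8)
The plan is to reduce the statement to the Gelfand pair machinery established in Lemma \ref{GP} and Theorem \ref{mf}. Recall from Section \ref{2} that decomposing the quandle ring $\mathbb{C}[A]$ into right simple ideals is equivalent to decomposing the representation $\rho : Inn(A) \to GL(\mathbb{C}[A])$ into irreducibles; hence it suffices to prove that $\rho$ is multiplicity free. Since $X=(A,f)$ is connected, $Inn(A)$ acts transitively on $A$, and by the lemma of \cite{BDS17} we have $Inn(A) \cong A \rtimes \langle f \rangle$, where the normal factor $A$ is abelian.

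First I would identify the point stabilizer. Writing $G = Inn(A) = A \rtimes K$ with $K = \langle f \rangle$, I would take the base point $0 \in A$ and check that its stabilizer is precisely $K$. Indeed, the normal factor $A$ corresponds to the translation subgroup $\{x \mapsto x + a : a \in A\}$, which acts simply transitively on $A$ (here using that $1-f$ is bijective, so every translation occurs), so its own stabilizer of $0$ is trivial; on the other hand $R_0 = f$ fixes $0$, so $K$ lies in the stabilizer, and comparing orders $|G|/|A| = |K|$ forces the stabilizer of $0$ to equal $K$. Via the canonical representation of $A$ as a homogeneous quandle, this realizes $A$ as the $G$-set $G/K$.

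Next, by Remark \ref{rmeq} the representation $\rho : G \to GL(\mathbb{C}[A])$ is equivalent to $\rho' : G \to GL(\mathbb{C}[G/K])$. Since the normal factor $A$ is abelian, Lemma \ref{GP} applies to the pair $(A \rtimes K, K) = (G,K)$ and shows that it is a Gelfand pair. Theorem \ref{mf} then yields that $\rho'$, and hence $\rho$, is multiplicity free, which is exactly the assertion that $\mathbb{C}[A]$ decomposes multiplicity free as a direct sum of right simple ideals.

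I expect the only delicate point to be the identification of the stabilizer of the base point with the complement $K$ in the semidirect product $A \rtimes K$, since the whole argument downstream hinges on applying Lemma \ref{GP} to this specific subgroup rather than to some other point stabilizer. Once the $G$-set isomorphism $A \cong G/K$ is pinned down, the result is a direct assembly of Lemma \ref{GP}, Theorem \ref{mf}, and Remark \ref{rmeq}, with no further computation required.
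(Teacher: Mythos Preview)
Your proposal is correct and follows essentially the same route as the paper: identify $Inn(A)\cong A\rtimes K$ with $K=\langle f\rangle$, check that the stabilizer of $0$ is $K$ so that $A\cong G/K$ as $G$-sets, and then combine Remark \ref{rmeq}, Lemma \ref{GP}, and Theorem \ref{mf}. Your justification of $Inn(A)_0=K$ is in fact more explicit than the paper's, which simply asserts it after noting that elements of $Inn(A)$ have the form $x\mapsto e+f^n(x)$.
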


\begin{proof}
Let $K = \langle f \rangle$ then $Inn(A) \cong A \rtimes K$. Moreover $Inn(A)$ consists of maps of form $x \to e + f^{n}(x)$ where $e \in A, n \in \mathbb{N}$. Let $0 \in A$ be the identity element. Then we get $Inn(A)_{0} \cong K$. This implies the quandle $(A,f)$ has canonical representation of form $Q_{Hom}(A \rtimes K,K,\phi)$. Using Remark \ref{rmeq}, we get that the representation $\rho : Inn(A) \to GL(\mathbb{C}[A])$ is equivalent to the representation $ {\rho}{'} : Inn(A) \to GL(\mathbb{C}[{A\rtimes K} / K])$. Now we get the result using Lemma \ref{mf} and Lemma \ref{GP}.
\end{proof}
The following theorem gives a class of connected quandles for which the quandle ring decomposes multiplicity free.
\begin{theorem}
Let $X$ be a finite connected quandle such that $X \otimes X = X \otimes X / \langle \tau \rangle$ then $\mathbb{C}[X]$ decomposes multiplicity free.
\end{theorem}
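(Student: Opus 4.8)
The plan is to recognize the hypothesis as exactly Gelfand's sufficient condition for $(\mathrm{Inn}(X),K)$ to be a Gelfand pair, and then invoke Theorem \ref{mf}. Write $G=\mathrm{Inn}(X)$; since $X$ is connected, $G$ acts transitively on $X$, so I would fix a base point $x_0\in X$ and let $K=G_{x_0}$ be its stabilizer. As recorded in Section \ref{2}, the decomposition of $\mathbb{C}[X]$ into right simple ideals coincides with the decomposition of the permutation representation $\rho:G\to GL(\mathbb{C}[X])$ into irreducibles, so proving that $\mathbb{C}[X]$ decomposes multiplicity free is the same as proving that $\rho$ is multiplicity free. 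By Remark \ref{rmeq}, $\rho$ is equivalent to $\rho':G\to GL(\mathbb{C}[G/K])$, and by Theorem \ref{mf} the latter is multiplicity free if and only if $(G,K)$ is a Gelfand pair, i.e.\ the Hecke algebra $\mathbb{C}[K\backslash G/K]$ is commutative. Thus the whole problem reduces to establishing this commutativity.

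The key step is to translate the hypothesis into a statement about double cosets. Identifying $X$ with $G/K$, the $G$-orbits on $X\times X$ are in bijection with the double cosets $K\backslash G/K$ via the assignment sending the orbit of $(x_0,g\cdot x_0)$ to $KgK$; this is the standard orbital/double-coset correspondence, and under it the number of orbits is exactly $|X\otimes X|$, since $X\otimes X$ is by construction the set of $\mathrm{Inn}(X)$-orbits on $X\times X$. The swap involution $\tau$ sends $(x_0,g\cdot x_0)$ to $(g\cdot x_0,x_0)$, which lies in the $G$-orbit of $(x_0,g^{-1}\cdot x_0)$; hence on double cosets $\tau$ corresponds to inversion $KgK\mapsto Kg^{-1}K$. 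The hypothesis $X\otimes X=X\otimes X/\langle\tau\rangle$ says that $\tau$ fixes every element of $X\otimes X$, so every orbit is self-paired, which is precisely the condition $KgK=Kg^{-1}K$ for all $g\in G$.

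Finally I would run Gelfand's trick. The inversion map induces the anti-automorphism $\sigma$ of the group algebra $\mathbb{C}[G]$ given by $\sigma(f)(g)=f(g^{-1})$, and it preserves the subalgebra of bi-$K$-invariant functions, that is the Hecke algebra $\mathbb{C}[K\backslash G/K]$ under convolution. A bi-invariant function is constant on double cosets, and because each double coset satisfies $KgK=Kg^{-1}K$ we get $\sigma(f)=f$ for every $f\in\mathbb{C}[K\backslash G/K]$, so $\sigma$ is the identity there. An anti-automorphism that is the identity forces commutativity, since for bi-invariant $f_1,f_2$ one has $f_1 * f_2=\sigma(f_1 * f_2)=\sigma(f_2) * \sigma(f_1)=f_2 * f_1$. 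Hence $\mathbb{C}[K\backslash G/K]$ is commutative, $(G,K)$ is a Gelfand pair, and Theorem \ref{mf} yields that $\rho'$, and therefore $\rho$, is multiplicity free; equivalently $\mathbb{C}[X]$ decomposes multiplicity free.

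The main obstacle is the middle step: setting up the orbital/double-coset correspondence carefully and verifying that the geometric involution $\tau$ matches group inversion, so that the quandle-theoretic hypothesis $X\otimes X=X\otimes X/\langle\tau\rangle$ becomes the purely algebraic condition $KgK=Kg^{-1}K$. Once this dictionary is in place, the commutativity of the Hecke algebra via the inversion anti-automorphism is routine, and the conclusion follows immediately from the Gelfand-pair machinery already developed in this section.
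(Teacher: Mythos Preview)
Your proposal is correct and follows essentially the same route as the paper: the paper observes that the hypothesis means every orbital is self-paired, i.e.\ $\tau[(x,y)]=[(x,y)]$, and then simply cites \cite[Example~4.3.2, Lemma~4.3.4]{SST08}, which is precisely the symmetric Gelfand-pair criterion you have spelled out (the orbital/double-coset dictionary and Gelfand's trick with the inversion anti-automorphism). In other words, you have unpacked the content of the citation rather than taking a different path.
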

\begin{proof}
Since $X \otimes X = X \otimes X / \langle \tau \rangle$. This implies $\tau[(x,y)] = [(y,x)] = [(x,y)]$ for all $x,y \in X$. Now the result follows from \cite[Example 4.3.2,Lemma 4.3.4]{SST08}.
\end{proof}

We note that the analogue of Theorem \ref{affmultfree} for connected quandles is not true, as evident from the following example.
\begin{ex}
Let $( X,\triangleright)$ be a quandle of order $12$ with multiplication table as below

$$(X,\triangleright)=\begin{array}{|c|c c c c c c c c c c c c|} 
		\hline
\triangleright \ &  e_1 & e_2 & e_3 & e_4 & e_5 & e_6 &  e_7  & e_8 & e_9 & e_{10} & e_{11} & e_{12}\\ \hline
\ e_1 &  e_1 & e_{11} & e_{10} & e_5 & e_{10} & e_9 & e_{11} & e_9 & e_1 & e_5 & e_2 & e_2 \\
\ e_2 &  e_{11} & e_2 & e_8 & e_7 & e_8 & e_3 & e_2 & e_3 & e_{11} & e_7 & e_1 & e_1 \\
\ e_3 &  e_4 & e_8 & e_3 & e_9 & e_3 & e_2 & e_8 & e_2 & e_4 & e_9 & e_5 & e_5 \\
\ e_4 &  e_3 & e_{10} & e_9 & e_4 & e_9 & e_{11} & e_{10} & e_{11} & e_3 & e_4 & e_6 & e_6 \\
\ e_5 &  e_{10} & e_6 & e_5 & e_1 & e_5 & e_7 & e_6 & e_7 & e_{10} & e_1 & e_3 & e_3 \\
\ e_6 &  e_8 & e_5 & e_7 & e_{11} & e_7 & e_6 & e_5 & e_6 & e_8 & e_{11} & e_4 & e_4 \\
\ e_7 &  e_{12} & e_7 & e_6 & e_2 & e_6 & e_5 & e_7 & e_5 & e_{12} & e_2 & e_9 & e_9 \\
\ e_8 &  e_6 & e_3 & e_2 & e_{12} & e_2 & e_8 & e_3 & e_8 & e_6 & e_{12} & e_{10} & e_{10} \\
\ e_9 &  e_9 & e_{12} & e_4 & e_3 & e_4 & e_1 & e_{12} & e_1 & e_9 & e_3 & e_7 & e_7 \\
\ e_{10} &  e_5 & e_4 & e_1 & e_{10} & e_1 & e_{12} & e_4 & e_{12} & e_5 & e_{10} & e_8 & e_8 \\
\ e_{11} &  e_2 & e_1 & e_{12} & e_6 & e_{12} & e_4 & e_1 & e_4 & e_2 & e_6 & e_{11} & e_{11} \\
\ e_{12} &  e_7 & e_9 & e_{11} & e_8 & e_{11} & e_{10} & e_9 & e_{10} & e_{7} & e_8 & e_{12} & e_{12} \\
	
		 \hline
		\end{array}\\$$

We observe that $(X,\triangleright)$ is non affine connected quandle with $Inn(X) \cong S_{4}$. Upon decomposing  $\mathbb{C}[X]$ we get that $\mathbb{C}[X] \cong V_{(4)} \oplus V_{(2,2)} \oplus V_{(2,1,1)} \oplus V_{(3,1)} \oplus V_{(3,1)}$. Here $V_{\lambda}$ represents the irreducible representation of $S_{4}$ with respect to partition $\lambda$. In fact, this is the smallest order connected quandle for which quandle ring decomposition is not multiplicity free.
\end{ex}
Furthermore, the analogue of Theorem \ref{affmultfree} is also not true for latin quandles. All connected quandles upto order $47$ are available in  \texttt{GAP} \cite{GAP} package \texttt{RIG} \cite{RIG}. We note that the quandles in \texttt{RIG} package are left distributive. We use the right distributive versions available in \cite{QC}. We follow Rig id convention given in \cite{QC} in this article.

Let $X$ be connected quandle of order $36$ with Rig id $[36,59]$. We note that $X$ is non affine latin quandle with $Inn(X) \cong ((\mathbb{Z}_{3} \times \mathbb{Z}_{3}) \rtimes Q_{8}) \rtimes \mathbb{Z}_{3} $. It can be verified using \texttt{GAP} that quandle ring $\mathbb{C}[X]$ does not decompose multiplicity free for this quandle. We also note that this is the only latin quandle of order $\leq 47$ for which $\mathbb{C}[X]$ does not have multiplicity free decomposition. 

We provide the list of connected quandles upto order $47$ having non multiplicity free $\mathbb{C}[X]$ decomposition in the following table.

\begin{center}
    \begin{tabular}{|c|c|}
        \hline
        Order & Rig id \\
        \hline
        $12$ & $[12,1]$ \\
        \hline
        $15$ & $[15,2]$ \\
        \hline
        $20$ & $[20,1]$ \\
        \hline
        $21$ & $[21,6]$ \\
        \hline
        $24$ & $[24,1],[24,3],[24,4],[24,5],[24,6],[24,7],[24,9],[24,10],[24,11],[24,20]$ \\
        \hline
        $27$ & $[27,1]$ \\
        \hline
        $30$ & $[30,1],[30,3],[30,4],[30,5]$ \\
        \hline
        $32$ & $[32,1],[32,2],[32,3]$ \\
        \hline
        $36$ & $[36,1],[36,4],[36,7],[36,9],[36,17],[36,20],[36,21],[36,27],[36,28],[36,29] [36,30],$\\
        &$[36,58],[36,59],[36,61],[36,62],[36,63]$ \\
        \hline
        $40$ & $[40,1],[40,7],[40,8],[40,9],[40,10],[40,11],[40,19],[40,20]$ \\
        \hline
        $42$ & $[42,11],[42,12],[42,13],[42,14],[42,24]$ \\
        \hline
        $45$ & $[45,27],[45,28],[45,29]$ \\
        \hline
    \end{tabular}
    \captionof{table}{List of connected quandles with non multiplicity free $\mathbb{C}[X]$ decomposition } \label{table:1}
\end{center}

So far we have provided two classes of finite connected quandles with multiplicity free $\mathbb{C}[X]$ decomposition. However there are connected quandles which do not belong to any of these two classes but have multiplicity free $\mathbb{C}[X]$ decomposition for example quandles with Rig id $[8,1]$ and $[12,2]$. In this regard, we are interested in the following problem.
\begin{problem}
Classify finite connected quandles $X$ for which $(Inn(X),Inn(X)_{e})$ is Gelfand pair.
\end{problem}

\medskip \noindent \textbf{Acknowledgement.}
We thank Mandira Mondal and David Stanovský for the helpful discussions.

\bigskip

%\newpage
\bibliographystyle{amsalpha}
\bibliography{QuandleRing}

\end{document}